\newtheorem{thm}{Theorem}[section]
\newtheorem{cor}[thm]{Corollary}
\newtheorem{lem}[thm]{Lemma}
\newtheorem{prop}[thm]{Proposition}
\theoremstyle{definition}
\newtheorem{ntn}[thm]{Notation}
\theoremstyle{remark}
\newtheorem{rmk}[thm]{Remark}
\newtheorem{example}[thm]{Example}
\newcommand{\lsp}{\operatorname{span}}
\newcommand{\clsp}{\operatorname{\overline{\lsp}}}
\newcommand{\midtext}[1]{\quad\text{ #1 }\quad}
\renewcommand{\and}{\midtext{and}}
\newcommand{\N}{\mathbb N}
\newcommand{\Z}{\mathbb Z}
\newcommand{\T}{\mathbb T}
\newcommand{\CC}{\mathcal C}
\newcommand{\GG}{\mathcal G}
\newcommand{\Uu}{\mathcal U}
\renewcommand{\a}{\alpha}
\renewcommand{\d}{\delta}
\newcommand{\m}{\mu}
\renewcommand{\t}{\theta}
\renewcommand{\L}{\Lambda}
\DeclareMathOperator{\aut}{Aut} \DeclareMathOperator{\ad}{Ad}
\newcommand{\id}{\text{\textup{id}}}
\newcommand{\iso}{\overset{\cong}{\longrightarrow}}
\newcommand{\what}{\widehat}
\newcommand{\rt}{\textup{rt}}
\newcommand{\invlim}{\varprojlim}
\newcommand{\dirlim}{\varinjlim}
\newcommand{\tgrphlim}{{
    \renewcommand{\leftarrow}{\leftharpoondown}
    \invlim
}}
\def\cs#1{{
\ensuremath{\mathbin{\stackrel{#1}{\leftharpoondown}}}}}
\title
[Coverings, skew-products and coactions]
{Coverings of skew-products and crossed products by
coactions}
\author{David Pask}
\address{David Pask \\ School of Mathematics and Applied Statistics \\ University of Wollongong \\ NSW, 2522 \\ AUSTRALIA}
\email{dpask@uow.edu.au}
\author{John Quigg}
\address{John Quigg \\ Department of Mathematics and Statistics \\ Arizona State University \\ Tempe, Arizona, 85287 \\ USA}
\email{quigg@asu.edu}
\author{Aidan Sims}
\address{Aidan Sims \\ School of Mathematics and Applied Statistics \\ University of Wollongong \\ NSW, 2522 \\ AUSTRALIA}
\email{asims@uow.edu.au}
\date{June 3, 2007; minor corrections October 2007, March 2008.}
\thanks{This research was supported by the ARC}
\subjclass[2000]{Primary: 46L05; Secondary: 46L55}
\keywords{$C^*$-algebra, coaction, covering, crossed-product, graph algebra, $k$-graph}
\numberwithin{equation}{section}
\begin{document}

\maketitle

\begin{abstract}
Consider a projective limit $G$ of finite groups $G_n$. Fix a
compatible family $\delta^n$ of coactions of the $G_n$ on a
$C^*$-algebra $A$. From this data we obtain a coaction $\delta$
of $G$ on $A$. We show that the coaction crossed product of $A$
by $\delta$ is isomorphic to a direct limit of the coaction
crossed products of $A$ by the $\delta^n$.

If $A = C^*(\Lambda)$ for some $k$-graph $\Lambda$, and if the
coactions $\delta^n$ correspond to skew-products of $\Lambda$,
then we can say more. We prove that the coaction
crossed-product of $C^*(\Lambda)$ by $\delta$ may be realised
as a full corner of the $C^*$-algebra of a $(k+1)$-graph. We
then explore connections with Yeend's topological higher-rank
graphs and their $C^*$-algebras.
\end{abstract}

\section{Introduction}

In this article we investigate how certain coactions of
discrete groups on $k$-graph $C^*$-algebras behave under
inductive limits. This leads to interesting new connections
between $k$-graph $C^*$-algebras, nonabelian duality, and
Yeend's topological higher-rank graph $C^*$-algebras.

We consider a particularly tractable class of coactions of
finite groups on $k$-graph $C^*$-algebras. A functor $c$ from a
$k$-graph $\Lambda$ to a discrete group $G$ gives rise to two
natural constructions. At the level of $k$-graphs, one may
construct the skew-product $k$-graph $\Lambda \times_c G$; and
at the level of $C^*$-algebras, $c$ induces a coaction $\delta$
of $G$ on $C^*(\Lambda)$. It is a theorem of \cite{PQR2} that
these two constructions are compatible in the sense that the
$k$-graph algebra $C^*(\Lambda \times_c G)$ is canonically
isomorphic to the coaction crossed-product $C^*$-algebra
$C^*(\Lambda) \times_\delta G$.

The skew-product construction is also related to discrete
topology: given a regular covering map from a $k$-graph
$\Gamma$ to a connected $k$-graph $\Lambda$, one obtains an
isomorphism of $\Gamma$ with a skew-product of $\Lambda$ by a
discrete group $G$ \cite[Theorem~6.11]{PQR2}. Further results
of \cite{PQR2} then show how to realise the $C^*$-algebra of
$\Gamma$ as a coaction crossed product of the $C^*$-algebra of
$\Lambda$.

The results of \cite{KPS} investigate the relationship between
$C^*(\Lambda)$ and $C^*(\Gamma)$ from a different point of
view. Specifically, they show how a covering $p$ of a $k$-graph
$\Lambda$ by a $k$-graph $\Gamma$ induces an inclusion of
$C^*(\Lambda)$ into $C^*(\Gamma)$. A sequence of compatible
coverings therefore gives rise to an inductive limit of
$C^*$-algebras. The main results of \cite{KPS} show how to
realise this inductive limit as a full corner in the
$C^*$-algebra of a $(k+1)$-graph.

We can combine the ideas discussed in the preceding three
paragraphs as follows. Fix a $k$-graph $\Lambda$, a projective
sequence of finite groups $G_n$, and a sequence of functors
$c_n : \Lambda \to G_n$ which are compatible with the
projective structure. We obtain from this data a sequence of
skew-products $\Lambda \times_{c_n} G_n$ which form a sequence
of compatible coverings of $\Lambda$. By results of \cite{KPS},
we therefore obtain an inductive system of $k$-graph
$C^*$-algebras $C^*(\Lambda \times_{c_n} G_n)$. The results of
\cite{PQR2} show that each $C^*(\Lambda \times_{c_n} G_n)$ is
isomorphic to a coaction crossed product $C^*(\Lambda)
\times_{\delta^n} G_n$. It is therefore natural to ask whether
the direct limit $C^*$-algebra $\varinjlim(C^*(\Lambda
\times_{c_n} G_n))$ is isomorphic to a coaction crossed product
of $C^*(\Lambda)$ by the projective limit group $\varprojlim
G_n$.

After summarising in Section~\ref{sec:prelims} the background
needed for our results, we answer this question in the
affirmative and in greater generality in Theorem~\ref{thm:ccp
continuous}. Given a $C^*$-algebra $A$, a projective limit of
finite groups $G_n$ and a compatible system of coactions of the
$G_n$ on $A$, we show that there is an associated coaction
$\delta$ of $\varprojlim G_n$ on $A$, such that
$
A \times_\delta (\varprojlim G_n)
 \cong \
\varinjlim (A \times_{\delta^n} G_n).
$

In Section~\ref{sec:k-graphs}, we consider the consequences of
Theorem~\ref{thm:ccp continuous} in the original motivating
context of $k$-graph $C^*$-algebras. We consider a $k$-graph
$\Lambda$ together with functors $c_n : \Lambda \to G_n$ which
are consistent with the projective limit structure on the
$G_n$. In Theorem~\ref{thm:tower cong ccp}, we use
Theorem~\ref{thm:ccp continuous} to deduce that $C^*(\Lambda)
\times_\delta G$ is isomorphic to $\varinjlim (C^*(\Lambda)
\times_{\delta^n} G_n)$. Using results of~\cite{KPS}, we
realise $C^*(\Lambda) \times_\delta G$ as a full corner in a
$(k+1)$-graph algebra (Corollary~\ref{cor:tower cong ccp dir
lim}). We digress in Section~\ref{sec:simplicity} to
investigate simplicity of $C^*(\Lambda) \times_\delta G$ via
the results of~\cite{RobSi}.

We conclude in Section~\ref{sec:topological k-graphs} with an
investigation of the connection between our results and Yeend's
notion of a topological $k$-graph \cite{Y1, Y2}. We construct from an
infinite sequence of coverings $p_n : \Lambda_{n+1} \to \Lambda_n$ of
$k$-graphs a projective limit $\Lambda$ which is a topological
$k$-graph. We show that the $C^*$-algebra $C^*(\Lambda)$ of this
topological $k$-graph coincides with the direct limit of the
$C^*(\Lambda_n)$ under the inclusions induced by the $p_n$. In
particular, the system of cocycles $c_n : \Lambda \to G_n$ discussed
in the preceding paragraph yields a cocycle $c : \Lambda \to G :=
\varprojlim(G_n, q_n)$, the skew-product $\Lambda \times_c G$ is a
topological $k$-graph, and the $C^*$-algebras $C^*(\Lambda \times_c
G)$ and $C^*(\Lambda) \times_\delta G$ are isomorphic, generalising
the corresponding result \cite[Theorem~7.1(ii)]{PQR2} for discrete
groups.

\medskip

\section{Preliminaries}\label{sec:prelims}
Throughout this paper, we regard $\N^k$ as a semigroup under
addition with identity element $0$. We denote the canonical
generators of $\N^k$ by $e_1, \dots, e_k$. For $n \in \N^k$, we
denote its coordinates by $n_1, \dots, n_k \in \N$ so that $n =
\sum^k_{i=1} n_i e_i$. For $m,n \in \N^k$, we write $m \le n$
if $m_i \le n_i$ for all $i \in \{1, \dots, k\}$.

We will at times need to identify $\N^k$ with the subsemigroup
of $\N^{k+1}$ consisting of elements $n$ whose last coordinate
is equal to zero. For $n \in \N^k$, we write $(n,0)$ for the
corresponding element of $\N^{k+1}$. When convenient, we regard
$\N^k$ as (the morphisms of) a category with a single object in
which the composition map is the usual addition operation in
$\N^k$.

\subsection{$k$-graphs}
Higher-rank graphs are defined in terms of categories. In this
paper, given a category $\CC$, we will identify the objects
with the identity morphisms, and think of $\CC$ as the
collection of morphisms only. We will write composition in our
categories by juxtaposition.

Fix an integer $k \ge 1$. A \emph{$k$-graph} is a pair
$(\Lambda,d)$ where $\Lambda$ is a countable category and $d :
\Lambda \to \N^k$ is a functor satisfying the factorisation
property: whenever $\lambda \in \Lambda$ and $m,n \in \N^k$
satisfy $d(\lambda) = m + n$, there are unique $\mu,\nu \in
\Lambda$ with $d(\mu) = m$, $d(\nu) = n$, and $\lambda =
\mu\nu$. For $n \in \N^k$, we write $\Lambda^n$ for
$d^{-1}(n)$. If $p \le q \le d(\lambda)$, we denote by
$\lambda(p,q)$ the unique path in $\Lambda^{q-p}$ such that
$\lambda = \lambda' \lambda(p,q) \lambda''$ for some $\lambda'
\in \Lambda^p$ and $\lambda'' \in \Lambda^{d(\lambda) - q}$

Applying the factorisation property with $m = 0$, $n =
d(\lambda)$ and with $m = d(\lambda)$, $n = 0$, one shows that
$\Lambda^0$ is precisely the set of identity morphisms in
$\Lambda$. The codomain and domain maps in $\Lambda$ therefore
determine maps $r,s : \Lambda \to \Lambda^0$. We think of
$\Lambda^0$ as the vertices --- and $\Lambda$ as the paths
--- in a ``$k$-dimensional directed graph.''

Given $F \subset \Lambda$ and $v \in \Lambda^0$ we write $v F$ for $F
\cap r^{-1}(v)$ and $F v$ for $F \cap s^{-1}(v)$. We say that
$\Lambda$ is \emph{row-finite} if $v\Lambda^n$ is a finite set for
all $v \in \Lambda^0$ and $n \in \N^k$, and we say that $\Lambda$ has
\emph{no sources} if $v\Lambda^n$ is always nonempty.

We denote by $\Omega_k$ the $k$-graph $\Omega_k := \{(p,q) \in
\N^k \times \N^k : p \le q\}$ with $r(p,q) := (p,p)$, $s(p,q)
:= (q,q)$ and $d(p,q) := q-p$. As a notational convenience, we
will henceforth denote $(p,p) \in \Omega_k^0$ by $p$. An
\emph{infinite path} in a $k$-graph $\Lambda$ is a
degree-preserving functor (otherwise known as a \emph{$k$-graph
morphism}) $x : \Omega_k \to \Lambda$. The collection of all
infinite paths is denoted $\Lambda^\infty$. We write $r(x)$ for
$x(0)$, and think of this as the range of $x$.

For $\lambda \in \Lambda$ and $x \in s(\lambda)\Lambda^\infty$,
there is a unique infinite path $\lambda x \in
r(\lambda)\Lambda^\infty$ satisfying $(\lambda x)(0,p) :=
\lambda x(0, p-d(\lambda))$ for all $p \ge d(\lambda)$. In
particular, $r(x) x = x$ for all $x \in \Lambda^\infty$, so we
denote $\{x \in \Lambda^\infty : r(x) = v\}$ by
$v\Lambda^\infty$. If $\Lambda$ has no sources, then
$v\Lambda^\infty$ is nonempty for all $v \in \Lambda^0$.

The factorisation property also guarantees that for $x \in
\Lambda^\infty$ and $n \in \N^k$ there is a unique infinite
path $\sigma^n(x) \in x(n)\Lambda^\infty$ such that
$\sigma^n(x)(p,q) = x(p+n, q+n)$. We somewhat imprecisely refer
to $\sigma$ as the \emph{shift map}. Note that
$\sigma^{d(\lambda)}(\lambda x) = x$ for all $\lambda \in
\Lambda$, $x \in s(\lambda)\Lambda^\infty$, and
$x=x(0,n)\sigma^n(x)$ for all $x \in \Lambda^\infty$ and $n \in
\N^k$.

We say a row-finite $k$-graph $\Lambda$ with no sources is
\emph{cofinal} if, for every $v \in \Lambda^0$ and every $x \in
\Lambda^\infty$ there exists $n \in \N^k$ such that $v \Lambda x(n)
\not= \emptyset$. Given $m \not= n \in \N^k$ and $v \in \Lambda^0$,
we say that $\Lambda$ has local periodicity $m,n$ at $v$ if
$\sigma^m(x) = \sigma^n(x)$ for all $x \in v\Lambda^\infty$. We say
that $\Lambda$ has \emph{no local periodicity} if, for every $m,n \in
\N^k$ and every $v \in \Lambda^0$, we have $\sigma^m(x) \not=
\sigma^n(x)$ for some $x \in v\Lambda^\infty$.

\subsection{Skew-products}
Let $\Lambda$ be a $k$-graph, and let $G$ be a group. A
\emph{cocycle} $c : \Lambda \to G$ is a functor from $\Lambda$ to $G$
where the latter is regarded as a category with one object. That is,
$c : \Lambda \to G$ satisfies $c(\mu\nu) = c(\mu)c(\nu)$ whenever
$\mu,\nu$ can be composed in $\Lambda$. It follows that $c(v) = e$
for all $v \in \Lambda^0$, where $e \in G$ is the identity element.

Given a cocycle $c : \Lambda \to G$, we can form the
\emph{skew-product $k$-graph} $\Lambda \times_c G$. We follow
the conventions of \cite[Section~6]{PQR2}. Note that these are
different to those of \cite[Section~5]{KP}. The paths in
$\Lambda \times_c G$ are
\[
(\Lambda \times_c G)^n := \Lambda^n \times G
\]
for each $n \in \N^k$. The range and source maps $r,s : \Lambda
\times_c G \to (\Lambda \times_c G)^0$ are given by
$r(\lambda,g) := (r(\lambda), c(\lambda)g)$ and $s(\lambda, g)
:= (s(\lambda), g)$. Composition is determined by $(\mu,
c(\nu)g)(\nu, g) = (\mu\nu, g)$. It is shown in
\cite[Section~6]{PQR2} that $\Lambda \times_c G$ is a
$k$-graph.

\subsection{Coverings and $(k+1)$-graphs}\label{sec:covering
systems}

We recall here some definitions and results from \cite{KPS} regarding
coverings of $k$-graphs. Given $k$-graphs $\Lambda$ and $\Gamma$, a
$k$-graph morphism $\phi : \Lambda \to \Gamma$ is a functor which
respects the degree maps. A \emph{covering of $k$-graphs} is a triple
$(\Lambda, \Gamma, p)$ where $\Lambda$ and $\Gamma$ are $k$-graphs,
and $p : \Gamma \to \Lambda$ is a $k$-graph morphism which is
surjective and is locally bijective in the sense that for each $v \in
\Gamma^0$, the restrictions $p|_{v\Gamma} : v\Gamma \to p(v)\Lambda$
and $p|_{\Gamma v} : \Gamma v \to \Lambda p(v)$ are bijective.

\begin{rmk}
What we have called a covering of $k$-graphs is a special case of
what was called a ``covering system of $k$-graphs'' in \cite{KPS}. In
general, a covering system consists of a covering of $k$-graphs
together with some extra combinatorial data. We do not need the extra
generality, so we have dropped the word ``system.''
\end{rmk}

A covering $(\Lambda,\Gamma, p)$ is \emph{row-finite} if
$\Lambda$ (equivalently $\Gamma$) is row-finite, and
$|p^{-1}(v)| < \infty$ for all $v \in \Lambda^0$.
Proposition~2.6 of \cite{KPS} shows that we can associate to a
row-finite covering $p : \Gamma \to \Lambda$ of $k$-graphs a
row-finite $(k+1)$-graph $\Lambda \cs{p} \Gamma$ containing
disjoint copies $\imath(\Lambda)$ and $\jmath(\Gamma)$ of
$\Lambda$ and $\Gamma$ with an edge of degree $e_{k+1}$
connecting each vertex $\jmath(v) \in \jmath(\Gamma^0)$ to its
image $\imath(p(v)) \in \imath(\Lambda^0)$.

More generally, given a sequence $(\Lambda_n, \Lambda_{n+1},
p_n)$ of row-finite coverings of $k$-graphs, Corollary~2.10 of
\cite{KPS} shows how to build a $(k+1)$-graph
$\tgrphlim(\Lambda_n; p_n)$, which we sometimes refer to as a
\emph{tower graph}, containing a copy $\imath_n(\Lambda_n)$ of
each individual $k$-graph in the sequence, and an edge of
degree $e_{k+1}$ connecting each $\imath_{n+1}(v) \in
\imath_{n+1}(\Lambda_{n+1}^0)$ to its image $\imath_n(p_n(v))
\in \imath_n(\Lambda_n^0)$. The $(k+1)$-graph
$\tgrphlim(\Lambda_n; p_n)$ has no sources if the $\Lambda_n$
all have no sources.

Given a covering $(\Lambda,\Gamma,p)$, \cite[Proposition~3.2 and
Theorem~3.8]{KPS} show that the covering map $p : \Gamma \to \Lambda$
induces an inclusion $\iota_p : C^*(\Lambda) \to C^*(\Gamma)$. If
$(\L_n,\L_{n+1},p_n)_{n=1}^\infty$ is a sequence of coverings, the
$(k+1)$-graph algebra $C^*(\tgrphlim(\Lambda_n; p_n))$ is Morita
equivalent to the direct limit $\varinjlim(C^*(\Lambda_n),
\iota_{p_n})$.

\subsection{Coactions and coaction crossed products}

Here we give some background on group coactions on $C^*$-algebras and
coaction crossed products. For a detailed treatment of coactions and
coaction crossed-products, see \cite[Appendix~A]{EKQR}.

Given a locally compact group $G$, we write $C^*(G)$ for the full
group $C^*$-algebra of $G$. We prefer to identify $G$ with its
canonical image in $M(C^*(G))$, but when confusion is likely we use
$s \mapsto u(s)$ for the canonical inclusion of $G$ in $M(C^*(G))$.
If $A$ and $B$ are $C^*$-algebras, then $A \otimes B$ denotes the
spatial tensor product. For a group $G$, we write $\delta_G$ for the
natural comultiplication $\delta_G : C^*(G) \to M(C^*(G) \otimes
C^*(G))$ given by the integrated form of the strictly continuous map
which takes $s \in G$ to $s \otimes s \in \Uu M(C^*(G) \otimes
C^*(G))$.

As in \cite[Definition~A.21]{EKQR}, a \emph{coaction} of a group $G$
on a $C^*$-algebra $A$ is an injective homomorphism $\delta : A \to
M(A \otimes C^*(G))$ satisfying
\begin{itemize}
\item[(1)] the \emph{coaction identity} $(\delta \otimes
    1_G) \circ \delta = (1_A \otimes \delta_G) \circ
    \delta$ (as maps from $A$ to $M(A \otimes C^*(G)
    \otimes C^*(G))$); and
\item[(2)] the \emph{nondegeneracy condition}
    $\overline{\delta(A)(1_A \otimes C^*(G))} = M(A \otimes
    C^*(G))$.
\end{itemize}
As in \cite{KQ1, KQ2}, the nondegeneracy condition~(2) ---
rather than the weaker condition that $\delta$ be a
nondegenerate homomorphism --- is part of our definition of a
coaction (compare with Definition~A.21 and Remark~A.22(3)
of~\cite{EKQR}). Since we will be dealing only with coactions
of compact (and hence amenable) groups, the two conditions are
equivalent in our setting in any case (see
\cite[Lemma~3.8]{Lan}).

Let $\delta : A \to M(A \otimes C^*(G))$ be a coaction of $G$
on $A$. We regard the map which takes $s \in G$ to $u(s) \in
M(C^*(G))$ as an element $w_G$ of $\Uu M(C_0(G) \otimes
C^*(G))$. Given a $C^*$-algebra $D$, A \emph{covariant
homomorphism} of $(A, G, \delta)$ into $M(D)$ is a pair $(\pi,
\mu)$ of homomorphisms $\pi : A \to M(D)$ and $\mu : C_0(G) \to
M(D)$ satisfying the covariance condition:
\[
(\pi \otimes \id_G) \circ \delta(a)
 = (\mu \otimes \id_G)(w_G)(\pi(a) \otimes 1) (\mu \otimes
 \id_G)(w_G)^*
\]
for all $a \in A$.

The coaction crossed-product $A \rtimes_\delta G$ is the universal
$C^*$-algebra generated by the image of a universal covariant
representation $(j_A, j_G)$ of $(A, G, \delta)$ (see
\cite[Theorem~A.41]{EKQR}).

\section{Continuity of coaction
crossed-products}\label{sec:coactions}

In this section, we prove a general result regarding the continuity
of the coaction crossed-product construction. Specifically, consider
a projective system of finite groups $G_n$ and a system of compatible
coactions $\delta^n$ of the $G_n$ on a fixed $C^*$-algebra $A$. We
show that this determines a coaction $\delta$ of the projective limit
$\varprojlim G_n$ on $A$, and that the coaction crossed product of
$A$ by $\delta$ is isomorphic to a direct limit of the coaction
crossed products of $A$ by the $\delta^n$.

The application we have in mind is when $A = C^*(\Lambda)$ is a
$k$-graph algebra, and the $\delta^n$ arise from a system of
skew-products of $\Lambda$ by the $G_n$. We consider this
situation in Section~\ref{sec:k-graphs}.

\begin{thm}\label{thm:ccp continuous}
Let $A$ be a $C^*$-algebra, and let
\[\xymatrix{
\cdots \ar[r]^-{q_{n+1}} &G_{n+1} \ar[r]^-{q_n} &G_n \ar[r] &\cdots
\ar[r]^-{q_1} &G_1 }\] be surjective homomorphisms of finite groups.
For each $n$ let $\d^n$ be a coaction of $G_n$ on $A$. Suppose that
the diagram
\begin{equation}\label{eq:commuting triangle hypothesis}
\xymatrix@C+30pt{ A \ar[r]^-{\d^{n+1}} \ar[dr]_{\d^n} &M(A\otimes
C^*(G_{n+1})) \ar[d]^{\id\otimes q_n}
\\
&M(A\otimes C^*(G_n))}
\end{equation}
commutes for each $n$.

For each $n$, write $Q_n$ for the canonical surjective homomorphism
of $\varprojlim (G_m, q_m)$ onto $G_n$; write $q^*_n : C(G_n) \to
C(G_{n+1})$ for the induced map $q^*_n(f) := f \circ q_n$; and write
$J_n$ for the homomorphism $J_n := j^{\d^{n+1}}_A\times
(j_{G_{n+1}}\circ q_n^*)$ from $A\times_{\d^n} G_n$ to
$A\times_{\d^{n+1}} G_{n+1}$.

Then there is a unique coaction $\d$ of $\invlim (G_n,q_n)$ on $A$
such that:
\begin{enumerate}
\item the diagrams
\[\xymatrix{
A \ar[r]^-\d \ar[dr]_{\d^n} &M(A\otimes C^*(\invlim G_n))
\ar[d]^{\id\otimes Q_n}
\\
&M(A\otimes C^*(G_n)) }\] commute; and

\item $A\times_\d \invlim (G_n, q_n) \cong \dirlim
    (A\times_{\d^n} G_n, J_n)$.
\end{enumerate}
\end{thm}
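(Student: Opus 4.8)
The plan is to construct $\delta$ by a direct-limit argument on the group $C^*$-algebra side, then verify the two claimed properties. Since $G = \invlim(G_n, q_n)$ is the projective limit of finite groups, its full group $C^*$-algebra satisfies $C^*(G) \cong \dirlim(C^*(G_n), (q_n^*)^\vee)$, where the connecting maps are dual to the $q_n$. Dually, $C_0(G) = C(G) \cong \invlim(C(G_n), q_n^*)$. The maps $\id \otimes Q_n : M(A \otimes C^*(G)) \to M(A \otimes C^*(G_n))$ are the building blocks: I would define $\delta$ as the unique map into $M(A \otimes C^*(G))$ characterised by the compatibility $(\id \otimes Q_n) \circ \delta = \delta^n$ for all $n$. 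The hypothesis that diagram~\eqref{eq:commuting triangle hypothesis} commutes is exactly what makes the family $(\delta^n)$ compatible with the projective structure, so such a $\delta$ exists and is unique.

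**First I would** establish that this $\delta$ is genuinely a coaction. The coaction identity $(\delta \otimes 1) \circ \delta = (1 \otimes \delta_G) \circ \delta$ can be checked by composing with the separating family $\id \otimes Q_n \otimes Q_m$ and reducing to the corresponding identity for each $\delta^n$, using that $\delta_G$ is compatible with $\delta_{G_n}$ under the $Q_n$. Injectivity of $\delta$ follows from injectivity of any single $\delta^n$, since $(\id \otimes Q_n)\circ\delta = \delta^n$. For nondegeneracy, I would invoke the remark in the preliminaries that for compact (hence amenable) $G$ the weaker nondegeneracy of the homomorphism suffices; here $G$ is a profinite, hence compact, group. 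This gives property~(1) essentially by construction.

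**The main work** is property~(2), the crossed-product isomorphism. The connecting maps $J_n = j^{\delta^{n+1}}_A \times (j_{G_{n+1}} \circ q_n^*)$ should be shown to be the covariant-pair-induced homomorphisms $A \times_{\delta^n} G_n \to A \times_{\delta^{n+1}} G_{n+1}$; I would verify that the pair $(j^{\delta^{n+1}}_A, j_{G_{n+1}} \circ q_n^*)$ is a covariant homomorphism of $(A, G_n, \delta^n)$, which rests on the commuting triangle hypothesis linking $\delta^{n+1}$ and $\delta^n$ via $\id \otimes q_n$, together with compatibility of the canonical unitaries $w_{G_n}$ and $w_{G_{n+1}}$ under $q_n^*$. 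To produce the isomorphism, I would exhibit a covariant homomorphism $(j^\delta_A, j_G)$ of $(A, G, \delta)$ whose restrictions to the building blocks factor through the $J_n$, and use the universal property of both sides: the direct limit maps into $A \times_\delta G$, and conversely a covariant pair into the direct limit pulls back. Concretely, I expect the generating map $C(G) = \invlim C(G_n)$ to pair with the $j_{G_n}$ compatibly, and $j^\delta_A$ with the $j^{\delta^n}_A$.

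**The hard part will be** checking that the covariance condition matches up correctly across the limit — specifically that the covariant homomorphism assembled on $A \times_\delta G$ restricts, via the canonical maps into the direct limit, to exactly the family $(j^{\delta^n}_A, j_{G_n})$ intertwined by the $J_n$, so that the two universal objects are mutually inverse. This requires care because the covariance condition involves the tensored unitary $w_G$, and one must confirm that applying $\id \otimes Q_n$ to the covariance identity for $\delta$ reproduces the covariance identity for $\delta^n$ with the correctly transported unitary $w_{G_n}$. Once the covariant pairs are matched, the isomorphism follows formally from the universal properties, and I would finish by checking that the composite in each direction is the identity on generators.
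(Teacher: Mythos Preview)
Your strategy is sound but takes a genuinely different route from the paper. You construct $\delta$ \emph{first}, directly from the compatible family $(\delta^n)$ via the condition $(\id\otimes Q_n)\circ\delta=\delta^n$, and then establish~(ii) by exhibiting mutually inverse maps between $A\times_\delta G$ and $\dirlim(A\times_{\delta^n}G_n)$ via matching covariant pairs. The paper instead works backwards through Landstad duality: it forms $B=\dirlim(A\times_{\delta^n}G_n,J_n)$ first, assembles on $B$ an action $\alpha$ of $G$ from the dual actions $\widehat{\delta^n}$ together with an $\rt$--$\alpha$ equivariant nondegenerate map $\mu:C(G)\to M(B)$ from the $j_{G_n}$, and then invokes \cite{qui:landstad} to recognise $B$ as $C\times_\delta G$ for some coaction $(C,G,\delta)$; the remaining work is to identify the fixed-point algebra $B^\alpha$ with the image of $A$, which is done by averaging over $G$. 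Property~(i) is then verified by a direct computation with $w_G$. The Landstad route absorbs most of the covariance bookkeeping into a single citation and makes the isomorphism in~(ii) automatic; your route is more hands-on and avoids that machinery, at the cost of the explicit covariance checks you flag as ``the hard part.''

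Two points to tighten in your sketch. First, your limit for $C(G)$ goes the wrong way: the maps $q_n^*:C(G_n)\to C(G_{n+1})$ are injections, so $C(G)\cong\dirlim(C(G_n),q_n^*)$ (the paper proves this via Stone--Weierstrass), not $\invlim$. Second, the separating property of the family $(\id\otimes Q_n)$ gives only \emph{uniqueness} of $\delta$; existence needs an argument. The cleanest way is to use the Peter--Weyl decomposition $C^*(G)\cong c_0\text{-}\bigoplus_{\pi\in\widehat G}M_{d_\pi}$, under which $M(A\otimes C^*(G))$ is the bounded product $\prod_\pi M(A)\otimes M_{d_\pi}$ and the $\delta^n(a)$ patch to a well-defined bounded element because every $\pi\in\widehat G$ factors through some $G_n$. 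Your notation ``$(q_n^*)^\vee$'' for the connecting maps on the $C^*(G_n)$ side is ambiguous; these are the non-unital inclusions $C^*(G_n)\hookrightarrow C^*(G_{n+1})$ coming from inflation of representations, and are sections of (not induced by) the surjections $q_n:C^*(G_{n+1})\to C^*(G_n)$.
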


\begin{rmk}
In diagram~\eqref{eq:commuting triangle hypothesis} we could
replace $M(A \otimes C^*(G_n))$ with $A \otimes C^*(G_n)$ and
$M(A \otimes C^*(G_{n+1}))$ with $A \otimes C^*(G_{n+1})$
because $G_n, G_{n+1}$ are discrete.
\end{rmk}

\begin{proof}[Proof of Theorem~\ref{thm:ccp continuous}]
Put\label{page:Jn}
\begin{align*}
G&=\invlim G_n\\
B_n&=A\times_{\d^n} G_n\\
J_n&=j^{\d^{n+1}}_A\times (j_{G_{n+1}}\circ q_n^*):B_n\to B_{n+1} \\
B&=\dirlim (B_n,J_n)\\
K_n&=\text{ the canonical embedding }B_n\to B.
\end{align*}
We aim to apply Landstad duality~\cite{qui:landstad}: we will show
that $B$ is of the form $C\times_\d G$ for some coaction $(C,G,\d)$,
and then we will show that we can take $C=A$. To apply
\cite{qui:landstad} we need:
\begin{itemize}
\item an action $\a$ of $G$ on $B$, and
\item a nondegenerate homomorphism $\m:C(G)\to M(B)$
which is $\rt-\a$ equivariant, where $\rt$ is the action of $G$
on $C(G)$ by right translation.
\end{itemize}
Then \cite{qui:landstad} will provide a coaction $(C,G,\d)$ and an
isomorphism
\[\t:B\iso C\times_\d G\]
such that
\[\t\circ\m=j_G\and \t(B^\a)=j_C(C).\]
This is simpler than the general construction of \cite{qui:landstad},
because our group $G$ is compact (and then we are really using
Landstad's unpublished characterisation \cite{lan:dualcompact} of
crossed products by coactions of compact groups).

We begin by constructing the action $\a$: for each $s\in G$ the
diagrams
\[\xymatrix@C+30pt{
B_{n+1} \ar[r]^-{\what{\d^{n+1}}_{Q_{n+1}(s)}} &B_{n+1}
\\
B_n \ar[u]^{J_n} \ar[r]_-{\what{\d^n}_{Q_n(s)}} &B_n \ar[u]_{J_n}
}\]
commute because
\begin{align*}
\what{\d^{n+1}}_{Q_{n+1}(s)}\circ J_n\circ j^{\d^n}_A
&=\what{\d^{n+1}}_{Q_{n+1}(s)}\circ j^{\d^{n+1}}_A
\\&=j^{\d^{n+1}}_A
\\&=J_n\circ j^{\d^n}_A
\\&=J_n\circ \what{\d^n}_{Q_n(s)}\circ j^{\d^n}_A
\end{align*}
and
\begin{align*}
\what{\d^{n+1}}_{Q_{n+1}(s)}\circ J_n\circ j_{G_n}
&=\what{\d^{n+1}}_{Q_{n+1}(s)}\circ j_{G_{n+1}}\circ q_n^*
\\&=j_{G_{n+1}}\circ \rt_{Q_{n+1}(s)}\circ q_n^*
\\&=j_{G_{n+1}}\circ q_n^*\circ \rt_{q_n\circ Q_{n+1}(s)}
\\&=J_n\circ j_{G_n}\circ \rt_{Q_n(s)}
\\&=J_n\circ \what{\d^n}_{Q_n(s)}\circ j_{G_n}.
\end{align*}
Thus, because the $\what{\d^n}_{Q_n(s)}$ are automorphisms, by
universality there is a unique automorphism $\a_s$ such that the
diagrams
\[\xymatrix@C+20pt{
B \ar@{-->}[r]^-{\a_s} &B
\\
B_n \ar[r]_-{\what{\d^n}_{Q_n(s)}} \ar[u]^{K_n} &B_n
\ar[u]_{K_n} }\] commute. It is easy to check that this gives a
homomorphism $\a:G\to \aut B$. We verify continuity: each
function $s\mapsto \a_s(b)$ for $b\in B$ is a uniform limit of
functions of the form  $s\mapsto \a_s\circ K_n(b)$ for $b\in
B_n$. But we have
\[\a_s\circ K_n(b)
=K_n\circ \what{\d^n}_{Q_n(s)}(b),\] which is continuous since
$K_n$, $Q_n$, and $t\mapsto \what{\d^n}_t(b):G_n\to B_n$ are.

We turn to the construction of the nondegenerate homomorphism $\m$:
first note that the increasing union $\bigcup_nQ_n^*(C(G_n))$ is
dense in $C(G)$ by the Stone-Weierstrass Theorem, and it follows that
there is an isomorphism
\[C(G)\cong \dirlim (C(G_n),q_n^*)\]
taking $Q_n^*$ to the canonical embedding. We have a compatible
sequence of nondegenerate homomorphisms
\[\xymatrix@C+20pt{
C(G_{n+1}) \ar[r]^-{j_{G_{n+1}}} &M(B_{n+1})
\\
C(G_n) \ar[u]^{q_n^*} \ar[r]_-{j_{G_n}} &M(B_n) \ar[u]_{J_n}, }\] so
by universality there is a unique homomorphism $\m$ making the
diagrams
\[\xymatrix{
C(G) \ar@{-->}[r]^-\m &M(B)
\\
C(G_n) \ar[u]^{Q_n^*} \ar[r]_-{j_{G_n}} &M(B_n) \ar[u]_{K_n} }\]
commute. Moreover, $\m$ is nondegenerate since $K_n$ and $j_{G_n}$
are.

We now have $\a$ and $\m$, and the equivariance
\[\a_s\circ\m=\m\circ \rt_s\]
follows from
\begin{align*}
\a_s\circ\m\circ Q_n^* &=\a_s\circ K_n\circ j_{G_n }
\\&=K_n\circ \what{\d^n}_{Q_n(s)}\circ j_{G_n}
\\&=K_n\circ j_{G_n}\circ \rt_{Q_n(s)}
\\&=\m\circ Q_n^*\circ \rt_{Q_n(s)}
\\&=\m\circ \rt_s\circ Q_n^*.
\end{align*}
Thus we can apply \cite{qui:landstad} to obtain a coaction
$(C,G,\d)$ and an isomorphism
\[\t:B\iso C\times_\d G\]
such that
\[\t\circ\m=j_G\and \t(B^\a)=j_C(C).\]
We want to take $C=A$. Note that we have a compatible sequence of
nondegenerate homomorphisms
\[\xymatrix@C+20pt{
A \ar[r]^-{j^{\d^{n+1}}_A} \ar[dr]_{j^{\d^n}_A} &B_{n+1}
\\
&B_n \ar[u]_{J_n}, }\] so by universality there is a unique
homomorphism $j$ making the diagrams
\[\xymatrix{
A \ar[r]^-j \ar[dr]_{j^{\d^n}_A} &B
\\
&B_n \ar[u]_{K_n} }\] commute. Moreover, $j$ is injective and
nondegenerate since $K_n$ and $j^{\d^n}_A$ are. Because $j$, $j_C$,
and $\t$ are faithful, to show that we can take $C=A$ it suffices to
show that
\[j(A)=B^\a.\]
We have
\[j(A)\subset B^\a\]
because
\begin{align*}
\a_s\circ j &=\a_s\circ K_n\circ j^{\d_n}_A
\\&=K_n\circ \what{\d^n}_{Q_n(s)}\circ j^{\d_n}_A
\\&=K_n\circ j^{\d_n}_A
\\&=j.
\end{align*}
For the opposite containment, let $b\in B^\a$. There is a
sequence $b_n\in B_n$ such that $K_n(b_n)\to b$. The functions
$s\mapsto \a_s\circ K_n(b_n)$ converge uniformly to the
function $s\mapsto \a_s(b)$, so
\[\int_G \a_s\circ K_n(b_n)\,ds\to \int_G \a_s(b)\,ds=b.\]
We have
\[\int_G \a_s\circ K_n(b_n)\,ds
=\int_G K_n\circ \what{\d^n}_{Q_n(s)}(b_n)\,ds =K_n\left(\int_G
\what{\d^n}_{Q_n(s)}(b_n)\,ds\right).\] Since
\[\int_G \what{\d^n}_{Q_n(s)}(b_n)\,ds
\in B_n^{\what{\d^n}}=j^{\d^n}_A(A),\] we conclude that
\[b\in K_n\circ j^{\d^n}_A(A)=j(A).\]
Therefore we can take $C=A$, so that we have a coaction $(A,G,\d)$
and an isomorphism
\[\t:B\iso A\times_\d G\]
such that
\[\t\circ\m=j_G.\]

We have proved (ii). For (i), we calculate:
\begin{align*}
(j^\d_A\otimes\id)\circ(\id\otimes Q_n)\circ\d &=(\id\otimes
Q_n)\circ
(j^\d_A\otimes\id)\circ\d
\\&=(\id\otimes Q_n)\circ\ad(j_G\otimes\id)(w_G)\circ(j^\d_A\otimes
1)
\\&=\ad(\id\otimes Q_n)\bigl((j_G\otimes\id)(w_G)\bigr)
\circ(\id\otimes Q_n)\circ(j^\d_A\otimes 1)
\\&=\ad(j_G\otimes\id)\bigl((\id\otimes Q_n)(w_G)\bigr)
\circ(j^\d_A\otimes 1)
\\&=\ad(j_G\otimes\id)\bigl((Q_n^*\otimes\id)(w_{G_n })\bigr)
\circ(j^\d_A\otimes 1)
\\&=\ad(j_G\circ Q_n^*\otimes\id)(w_{G_n}) \circ(j^\d_A\otimes 1)
\\&=\ad(\t\circ K_n\circ j_{G_n}\otimes\id)(w_{G_n}) \circ(\t\circ K_n\circ
j^{\d^n}_A\otimes 1)
\\&=(\t\circ K_n\otimes\id)\circ\ad(j_{G_n}\otimes\id)(w_{G_n})\circ(j^{\d^n}_A\otimes
1)
\\&=(\t\circ K_n\otimes\id)\circ(j^{\d^n}_A\otimes\id)\circ\d^n
\\&=(\t\circ K_n\circ j^{\d^n}_A\otimes\id)\circ\d^n
\\&=(j^\d_A\otimes\id)\circ\d^n.
\end{align*}
Since $j^\d_A$ is faithful, we therefore have $(\id
\otimes Q_n)\circ \d = \d^n$.
\end{proof}

The following application of Theorem~\ref{thm:ccp continuous}
motivates the work of the following sections.

\begin{example}\label{eg:BD}
Let $A = C(\T) = C^*(\Z)$, and let $z$ denote the canonical
generating unitary function $z \mapsto z$. For $n \in \N$, let
$G_n := \Z/2^{n-1}\Z$ be the cyclic group of order $2^{n-1}$.
We write $1$ for the canonical generator of $G_n$ and $0$ for
the identity element. Let $g \mapsto u_n(g)$ denote the
canonical embedding of $G_n$ into $C^*(G_n)$. Define $q_n :
G_{n+1} \to G_n$ by $q_n(m) := m\ ({\rm mod}\ 2^{n-1})$, and
write $q_n$ also for the homomorphism $q_n : C^*(G_{n+1}) \to
C^*(G_n)$ satisfying $q_n(u_{n+1}(g)) = u_n(q_n(g))$. For each
$n$, let $\delta^n$ be the coaction of $G_n$ on $A$ determined
by $\delta^n(z) := z \otimes u_n(1)$.

Let $g \mapsto u(g)$ denote the canonical embedding of
$\varprojlim G_n$ as unitaries in the multiplier algebra of
$C^*(\varprojlim G_n)$. The coaction $\delta$ of $\varprojlim
G_n$ on $A$ described in Theorem~\ref{thm:ccp continuous} is
the one determined by $\delta(z) := z \otimes u(1,1,\dots)$;
the corresponding coaction crossed-product is known to be
isomorphic to the Bunce-Deddens algebra of type $2^\infty$
(see, for example, \cite[8.4.4]{fill}).
\end{example}

\section{Coverings of skew-products}\label{sec:k-graphs}
In this section and the next, we adopt the following notation and
assumptions.

\begin{ntn}\label{ntn:standing}
Let $\Lambda$ be a connected row-finite $k$-graph with no sources.
Fix a vertex $v \in \Lambda^0$, and denote by $\pi\Lambda$ the
fundamental group $\pi_1(\Lambda,v)$ of $\Lambda$ with respect to
$v$. Fix a cocycle $c : \Lambda \to \pi\Lambda$ such that the skew
product $\Lambda \times_c \pi\Lambda$ is isomorphic to the universal
covering $\Omega_\Lambda$ of $\Lambda$ (such a cocycle exists by
\cite[Corollary~6.5]{PQR2}).

Fix a descending chain of finite-index normal subgroups
\begin{equation}\label{eq:subgroups}
\dots \lhd H_{n+1} \lhd H_n \lhd \dots \lhd H_1 := \pi\Lambda.
\end{equation}
For each $n$, let $G_n := \pi\Lambda/H_n$, and let $q_n : G_{n+1} \to
G_n$ be the induced homomorphism
\[
q_n(gH_{n+1}) := gH_n.
\]
Then
\[\xymatrix{
\cdots \ar[r]^-{q_{n+1}} &G_{n+1} \ar[r]^-{q_n} &G_n \ar[r] &\cdots
\ar[r]^-{q_1} &G_1 := \{e\} }
\]
is a chain of surjective homomorphisms of finite groups. Let $G$
denote the projective limit group $\varprojlim(G_n, q_n)$.

For each $n$, let $c_n : \Lambda \to G_n$ be the induced cocycle
$c_n(\lambda) = c(\lambda)H_n$, and let
\[
\Lambda_n := \Lambda \times_{c_n} G_n
\]
be the skew-product $k$-graph. Define covering maps $p_n :
\Lambda_{n+1} \to \Lambda_n$ by $p_n(\lambda, g) := (\lambda,
q_n(g))$.

As in \cite[Theorem~7.1(1)]{PQR2}, for each $n$ there is a coaction
$\delta^n : C^*(\Lambda) \to C^*(\Lambda) \otimes C^*(G_n)$
determined by $\delta^n(s_\lambda) := s_\lambda \otimes
c_n(\lambda)$. Denote by $J_n$ the inclusion
\[
J_n := j^{\delta^{n+1}}_{C^*(\Lambda)}\times (j_{G_{n+1}}\circ q_n^*):
C^*(\Lambda)
\times_{\delta^n} G_n\to C^*(\Lambda) \times_{\delta^{n+1}} G_{n+1}
\]
described in Theorem~\ref{thm:ccp continuous}(ii).

As in \cite[Theorem~7.1(ii)]{PQR2}, for each $n$ there is an
isomorphism $\phi_n$ of $C^*(\Lambda_n) = C^*(\Lambda \times_{c_n}
G_n)$ onto $C^*(\Lambda) \times_{\delta^n} (G_n)$ which satisfies
$\phi_n(s_{(\lambda, g)}) := (s_\lambda, g)$.
\end{ntn}

\begin{example}[Example~\ref{eg:BD} Continued]\label{eg:BDii}
Let $\Lambda$ be the path category of the directed graph $B_1$
consisting of a single vertex $v$ and a single edge $f$ with $r(f) =
s(f) = v$. Note that as a category, $\Lambda$ is isomorphic to $\N$,
and the degree functor is then the identity function from $\N$ to
itself.

Then $\pi\Lambda$ is the free abelian group generated by the homotopy
class of $f$, and so is isomorphic to $\Z$. We define a functor $c :
\Lambda \to \Z$ by $c(f) = 1$.

For each $n$, let $H_n := 2^{n-1}\Z \subset \Z$, so that $\dots \lhd
H_{n+1} \lhd H_n \lhd \dots \lhd H_1 := \pi\Lambda$ is a descending
chain of finite-index normal subgroups. For each $n$, $G_n := \Z /
H_n$ is the cyclic group of order $2^{n-1}$, and $q_n : G_{n+1} \to
G_n$ is the quotient map described in Example~\ref{eg:BD}. The
induced cocycle $c_n : \Lambda \to G_n$ obtained from $c$ is
determined by $c_n(f) = 1 \in \Z/2^{n-1} \Z$.

For $p \in \N$, let $C_p$ denote the simple cycle graph with
$p$ vertices: $C_p^0 := \{v^p_j : j \in \Z/p\Z\}$ and $C_p^1 :=
\{e^p_j : j \in \Z/p\Z\}$, where $r(e^p_i) = v^p_i$ and
$s(e^p_i) = v^p_{i + 1\mod p}$. For each $n$, the skew-product
graph $\Lambda_n := \Lambda \times_{c_n} G_n$ is isomorphic to
the path-category of $C_{2^{n-1}}$. The associated covering map
$p_n : \Lambda_{n+1} \to \Lambda_n$ corresponds to the
double-covering of $C_{2^{n-1}}$ by $C_{2^n}$ satisfying
$v^{2^n}_i \mapsto v^{2^{n-1}}_{i\mod 2^{n-1}}$ and $e^{2^n}_i
\mapsto e^{2^{n-1}}_{i\mod 2^{n-1}}$.

Modulo a relabelling of the generators of $\N^2$, the $2$-graph
$\tgrphlim(\Lambda_n, p_n)$ obtained from this data as in
\cite{KPS} (see Section~\ref{sec:covering systems}) is
isomorphic to the $2$-graph of \cite[Example~6.7]{PRRS}.
Combining this with the final observation of
Example~\ref{eg:BD}, we obtain a new proof that the
$C^*$-algebra of this $2$-graph is Morita equivalent to the
Bunce-Deddens algebra of type $2^\infty$ (see
\cite[Example~6.7]{PRRS} for an alternative proof).
\end{example}

\begin{thm}\label{thm:tower cong ccp}
Adopt the notation and assumptions~\ref{ntn:standing}. Taking $A :=
C^*(\Lambda)$, the coactions $\delta^n$ and the quotient maps $q_n$
make the diagrams~\eqref{eq:commuting triangle hypothesis} commute.
Let $\delta$ denote the coaction of $G := \invlim(G_n, q_n)$ on
$C^*(\Lambda)$ obtained from Theorem~\ref{thm:ccp continuous}. Let
$P_0$ denote the projection $\sum_{v \in \Lambda^0} s_v$ in the
multiplier algebra of $C^*(\tgrphlim(\Lambda_n, p_n))$. Then $P_0$ is
full and
\[
P_0 C^*(\tgrphlim(\Lambda_n, p_n)) P_0 \cong C^*(\Lambda)
\times_\delta G.
\]
\end{thm}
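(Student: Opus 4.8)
\section*{Proof proposal}

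The plan is to verify the hypothesis of \thmref{thm:ccp continuous}, transport its conclusion through the isomorphisms $\phi_n$, and then splice in the tower-graph corner from \cite{KPS}. First I would check that the coactions $\delta^n$ and the maps $q_n$ make \eqref{eq:commuting triangle hypothesis} commute. Since each $\delta^n$ is determined by its values on the generators $s_\lambda$, it suffices to see that $(\id\otimes q_n)(\delta^{n+1}(s_\lambda)) = \delta^n(s_\lambda)$; as $\delta^{n+1}(s_\lambda) = s_\lambda\otimes c_{n+1}(\lambda)$ this reduces to $q_n(c(\lambda)H_{n+1}) = c(\lambda)H_n$, which is immediate from the definitions of $c_n$ and $q_n$. \thmref{thm:ccp continuous} then supplies the coaction $\delta$ of $G = \varprojlim(G_n,q_n)$ on $A := C^*(\Lambda)$ together with the isomorphism $A\times_\delta G \cong \varinjlim(A\times_{\delta^n} G_n, J_n)$ of part~(ii).

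Next I would show that the isomorphisms $\phi_n \colon C^*(\Lambda_n)\to A\times_{\delta^n} G_n$ constitute an isomorphism of inductive systems, that is, $\phi_{n+1}\circ\iota_{p_n} = J_n\circ\phi_n$. As both sides are homomorphisms it is enough to test this on the generators $s_{(\lambda,h)}$ of $C^*(\Lambda_n)$. The fibre of the covering $p_n$ over $(\lambda,h)$ is $\{(\lambda,g) : g\in q_n^{-1}(h)\}$, so the \cite{KPS} inclusion gives $\iota_{p_n}(s_{(\lambda,h)}) = \sum_{g\in q_n^{-1}(h)} s_{(\lambda,g)}$, whence $\phi_{n+1}(\iota_{p_n}(s_{(\lambda,h)})) = \sum_{g\in q_n^{-1}(h)}(s_\lambda,g)$. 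On the other side $\phi_n(s_{(\lambda,h)}) = (s_\lambda,h) = j^{\delta^n}_A(s_\lambda)\,j_{G_n}(\chi_h)$, where $\chi_h$ is the point mass at $h$; since $q_n^*(\chi_h) = \sum_{g\in q_n^{-1}(h)}\chi_g$, the defining formula $J_n = j^{\delta^{n+1}}_A\times(j_{G_{n+1}}\circ q_n^*)$ yields $J_n(\phi_n(s_{(\lambda,h)})) = \sum_{g\in q_n^{-1}(h)}(s_\lambda,g)$ as well. The two expressions agree, so the $\phi_n$ induce an isomorphism $\varinjlim(C^*(\Lambda_n),\iota_{p_n}) \cong \varinjlim(A\times_{\delta^n} G_n, J_n)$.

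Finally I would splice in the tower graph. Because $\Lambda_1 = \Lambda\times_{c_1}\{e\}$ is isomorphic to $\Lambda$, the level-one vertices of $\tgrphlim(\Lambda_n,p_n)$ are indexed by $\Lambda^0$, and $P_0 = \sum_{v\in\Lambda^0} s_v$ is the sum of their projections. Every vertex of the tower is joined to level one by a path of $e_{k+1}$-edges, so $P_0$ is full; moreover the results of \cite{KPS} realize the Morita equivalence between $C^*(\tgrphlim(\Lambda_n,p_n))$ and $\varinjlim(C^*(\Lambda_n),\iota_{p_n})$ as the full corner determined by $P_0$, giving $P_0\,C^*(\tgrphlim(\Lambda_n,p_n))\,P_0 \cong \varinjlim(C^*(\Lambda_n),\iota_{p_n})$. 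Chaining the three isomorphisms produces $P_0\,C^*(\tgrphlim(\Lambda_n,p_n))\,P_0 \cong A\times_\delta G = C^*(\Lambda)\times_\delta G$.

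The main obstacle is the intertwining identity $\phi_{n+1}\circ\iota_{p_n} = J_n\circ\phi_n$, since the two connecting maps arise from entirely different mechanisms: $\iota_{p_n}$ from the covering combinatorics of \cite{KPS}, where a spanning element is summed over its fibre, and $J_n$ from the universal property of the coaction crossed product, built from the pullback $q_n^*$ on $C(G_n)$. The computation must confirm that the fibre $q_n^{-1}(h)$ coincides exactly with the support of $q_n^*(\chi_h)$, which is what makes the two sums match; pinning down the precise \cite{KPS} inclusion formula and the meaning of the generators $(s_\lambda,g)$ is where the care is needed. A secondary point is to confirm that the \cite{KPS} Morita equivalence is implemented by the corner cut by $P_0$ rather than by some other full projection.
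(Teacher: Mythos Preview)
Your proposal is correct and follows the same route as the paper: verify \eqref{eq:commuting triangle hypothesis} on generators, invoke \thmref{thm:ccp continuous}(ii), establish the intertwining $\phi_{n+1}\circ\iota_{p_n}=J_n\circ\phi_n$ by the fibre computation on $s_{(\lambda,h)}$ (this is exactly the content of \lemref{lem:well-behaved}), and then apply the \cite{KPS} identification of the corner $P_0 C^*(\tgrphlim(\Lambda_n,p_n)) P_0$ with $\varinjlim(C^*(\Lambda_n),\iota_{p_n})$ (packaged as \corref{cor:tower cong ccp dir lim}). The only difference is cosmetic: the paper isolates the intertwining and the corner identification as a separate lemma and corollary before the two-line proof of the theorem, whereas you do everything inline.
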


To prove this theorem, we first show that in the setting described
above, the inclusions of $k$-graph algebras induced from the
coverings $p_n : \Lambda_{n+1} \to \Lambda_n$ as in \cite{KPS} are
compatible with the inclusions of coaction crossed products induced
from the quotient maps $q_n : G_{n+1} \to G_n$.

\begin{lem}\label{lem:well-behaved} With the notation and
assumptions~\ref{ntn:standing}, fix $n \in \N$, and let $\iota_{p_n}$
be the inclusion of $C^*(\Lambda_n)$ into $C^*(\Lambda_{n+1})$
obtained from \cite[Proposition~3.3(iv)]{KPS}. Then the inclusion
$\iota_n$ and the isomorphisms $\phi_n, \phi_{n+1}$ of
Notation~\ref{ntn:standing} make the following diagram commute.
\[
\xymatrix{
C^*(\Lambda_n) \ar[r]^-{\iota_{p_n}} \ar[d]^{\phi_n} &
C^*(\Lambda_{n+1}) \ar[d]^{\phi_{n+1}} \\
C^*(\Lambda) \times_{\delta^n} G_n \ar[r]^-{\iota_n} & C^*(\Lambda)
\times_{\delta^{n+1}} G_{n+1} }
\]
\end{lem}
\begin{proof}
By definition, we have
\[
\iota_{p_n}(s_{(\lambda, gH_n)}) = \sum_{p(\lambda', g'H_{n+1}) =
(\lambda, gH_n)} s_{(\lambda', g'H_{n+1})}.
\]
By definition of $p_n$, this becomes
\[
\iota_{p_n}(s_{(\lambda, gH_n)}) = \sum_{\{g'H_{n+1} \in G_{n+1} :
g'H_n=gH_n\}} s_{(\lambda, g'H_{n+1})}.
\]
Hence
\[
\phi_{n+1} \circ \iota_{p_n}(s_{(\lambda, gH_n)}) =
\sum_{\{g'H_{n+1}
\in
G_{n+1} : g'H_n=gH_n\}} (s_{\lambda}, g'H_{n+1}).
\]
But this is precisely $\iota(\phi_n(s_{(\lambda,gH_n)}))$ by
definition of $\iota$ and $\phi_n$.
\end{proof}

\begin{cor}\label{cor:tower cong ccp dir lim}
With the notation and assumptions~\ref{ntn:standing}, let $P_0$
denote the projection $\sum_{v \in \Lambda^0} s_v$ in the multiplier
algebra of $C^*(\tgrphlim(\Lambda_n, p_n))$. Then $P_0$ is full, and
\[
P_0 C^*(\tgrphlim(\Lambda_n, p_n))P_0 \cong \varinjlim (C^*(\Lambda)
\times_{\delta^n} G_n, \iota_n).
\]
\end{cor}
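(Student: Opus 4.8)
The plan is to derive the corollary as a quick consequence of Lemma~\ref{lem:well-behaved} together with the full-corner realisation of the tower-graph algebra in \cite{KPS}, using the principle that an isomorphism of inductive systems descends to an isomorphism of their direct limits.

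First I would pin down what \cite{KPS} provides. Since $G_1 = \{e\}$, the skew-product $\Lambda_1 = \Lambda \times_{c_1} G_1$ is canonically isomorphic to $\Lambda$, so $P_0 = \sum_{v \in \Lambda^0} s_v$ is exactly the sum of the vertex projections over the bottom-level copy $\imath_1(\Lambda_1^0)$ in $\tgrphlim(\Lambda_n, p_n)$. Fullness of $P_0$ I would check directly from the construction: each vertex $\imath_{n+1}(u)$ is joined to $\imath_n(p_n(u))$ by an edge of degree $e_{k+1}$, so iterating produces, for any vertex $w$ at level $n$, a path of degree $(0, n-1)$ with source $w$ and range in $\imath_1(\Lambda_1^0)$, placing $s_w$ in the ideal generated by $P_0$. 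The Morita equivalence of \cite{KPS} is realised through this corner, giving
\[
P_0 C^*(\tgrphlim(\Lambda_n, p_n)) P_0 \cong \varinjlim(C^*(\Lambda_n), \iota_{p_n}).
\]

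Next I would invoke Lemma~\ref{lem:well-behaved}: for each $n$ the square with horizontal maps $\iota_{p_n}$, $\iota_n$ and vertical isomorphisms $\phi_n$, $\phi_{n+1}$ commutes, so $\{\phi_n\}$ is an isomorphism of the inductive system $(C^*(\Lambda_n), \iota_{p_n})$ onto $(C^*(\Lambda) \times_{\delta^n} G_n, \iota_n)$. The universal property of the direct limit then gives
\[
\varinjlim(C^*(\Lambda_n), \iota_{p_n}) \cong \varinjlim(C^*(\Lambda) \times_{\delta^n} G_n, \iota_n),
\]
and composing with the corner isomorphism above completes the proof. The only genuinely delicate point---and the nearest thing to an obstacle---is matching the projection $P_0$ named here with the bottom-level vertex projection implicit in \cite{KPS}'s Morita equivalence; once $\Lambda \cong \Lambda_1$ is fixed this is automatic, and what remains is the bookkeeping needed to ensure the commuting squares of Lemma~\ref{lem:well-behaved} assemble into an honest isomorphism of inductive systems rather than a merely levelwise one.
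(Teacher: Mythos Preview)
Your proposal is correct and follows essentially the same route as the paper: invoke \cite{KPS} (specifically Equation~(3.2) there) to identify the corner $P_0 C^*(\tgrphlim(\Lambda_n, p_n)) P_0$ with $\varinjlim(C^*(\Lambda_n), \iota_{p_n})$, then use Lemma~\ref{lem:well-behaved} together with the universal property of the direct limit to pass to $\varinjlim(C^*(\Lambda) \times_{\delta^n} G_n, \iota_n)$. Your explicit check of fullness via the $e_{k+1}$-edges and your remark on the identification $\Lambda \cong \Lambda_1$ are helpful elaborations but not departures from the paper's argument.
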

\begin{proof}
Equation~(3.2) of~\cite{KPS} implies that $P_0
C^*(\tgrphlim(\Lambda_n, p_n)) P_0$ is isomorphic to
$\varinjlim(C^*(\Lambda_n), \iota_{p_n})$. The latter is isomorphic
to $\varinjlim (C^*(\Lambda) \times_{\delta^n} G_n, \iota_n)$ by
Lemma~\ref{lem:well-behaved} and the universal property of the direct
limit.
\end{proof}

\begin{proof}[Proof of Theorem~\ref{thm:tower cong ccp}]
It is immediate from the definitions of the maps involved that the
maps $\delta^n$ and $q_n$ make the diagram~\eqref{eq:commuting
triangle hypothesis} commute. The rest of the statement then follows
from Corollary~\ref{cor:tower cong ccp dir lim} and
Theorem~\ref{thm:ccp continuous}(ii).
\end{proof}

\section{Simplicity}\label{sec:simplicity}
In this section we frequently embed $\N^k$ into $\N^{k+1}$ as the
subset consisting of elements whose $(k+1)^{\rm st}$ coordinate is
equal to zero. For $n \in \N^k$, we write $(n,0)$ for the
corresponding element of $\N^{k+1}$.

\begin{thm}\label{thm:simple}
Adopt the notation and assumptions~\ref{ntn:standing}. The
$(k+1)$-graph $C^*$-algebra $C^*(\tgrphlim(\Lambda_n, p_n))$ is
simple if and only if the following two conditions are
satisfied:
\begin{enumerate}
\item
each $\Lambda_n$ is cofinal, and
\item\label{item:separable}
whenever $v \in \Lambda^0$, $p \not= q \in \N^k$ satisfy
$\sigma^p(x) = \sigma^q(x)$ for all $x \in v\Lambda^0$, there
exists $x \in v\Lambda^\infty$, $l \in \N^k$ and $N \in \N$ such
that $c_N(x(p, p+l)) \not= c_N(x(q,q+l))$.
\end{enumerate}
\end{thm}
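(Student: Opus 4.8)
The plan is to derive the theorem from the simplicity criterion of \cite{RobSi}. Write $\Gamma := \tgrphlim(\Lambda_n, p_n)$. Under Notation~\ref{ntn:standing} each $\Lambda_n = \Lambda\times_{c_n} G_n$ is a row-finite $k$-graph with no sources (the $G_n$ are finite and $\Lambda$ is row-finite with no sources), and the coverings have finite fibres, so $\Gamma$ is a row-finite $(k+1)$-graph with no sources. Hence by \cite{RobSi} the algebra $C^*(\Gamma)$ is simple if and only if $\Gamma$ is cofinal and has no local periodicity, and the entire proof reduces to translating these two properties of $\Gamma$ into conditions (i) and (ii). The tool driving both translations is an explicit description of $\Gamma^\infty$: an infinite path $x \in \Gamma^\infty$ strictly ascends the tower in its last coordinate, so for each level $N$ beyond the range of $x$ its horizontal slice is an infinite path $x_N$ in $\Lambda_N$; these slices are compatible under the covering maps $p_N$ and all project to a single underlying path $y \in \Lambda^\infty$.

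First I would handle cofinality. Since the vertical edges of $\Gamma$ run downward through the $p_N$, which are surjective and locally bijective, a vertex $x(n)$ far out along an infinite path can be pushed down to any prescribed lower level $M$, landing at a definite vertex of $\Lambda_M$; connecting that vertex back to a given $w \in \Lambda_M^0$ is then exactly a cofinality question inside $\Lambda_M$. Running this argument in both directions should show that $\Gamma$ is cofinal precisely when every $\Lambda_n$ is cofinal, which is condition~(i). The $\Lambda_n$ are connected coverings of the connected graph $\Lambda$, so no pathologies intervene, and I expect this step to be essentially routine.

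The substance is the local periodicity analysis. For $m \ne n$ in $\N^{k+1}$ write $m = (m',a)$ and $n = (n',b)$ with $m',n' \in \N^k$. If $a \ne b$ then $\sigma^m(x)$ and $\sigma^n(x)$ have ranges on different levels of the tower and so are automatically distinct; thus $\Gamma$ can only fail to be aperiodic through pairs with $a = b$ and $m' \ne n'$. For such a pair, and a vertex of $\Gamma$ lying over $u \in \Lambda^0$, I would compute directly from the skew-product conventions that the range of $\sigma^{m'}(x_N)$ is $\bigl(y(m'),\, c_N(y(0,m'))^{-1}h\bigr)$, and similarly for $n'$. Comparing these at every level $N$ through which $x$ passes shows that $\sigma^m(x) = \sigma^n(x)$ holds exactly when $\sigma^{m'}(y) = \sigma^{n'}(y)$ in $\Lambda$ \emph{and} $c_N(y(0,m')) = c_N(y(0,n'))$ for all those $N$. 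Consequently $\Gamma$ has a local periodicity over $u$ precisely when $\Lambda$ has local periodicity $m',n'$ at $u$ and, for every $y \in u\Lambda^\infty$ and every sufficiently large $N$, the cocycle $c_N$ fails to separate the initial segments $y(0,m')$ and $y(0,n')$. Negating this yields condition~(ii): whenever $\Lambda$ has local periodicity $p,q$ at $v$, some $y \in v\Lambda^\infty$ and some $N$ satisfy $c_N(y(0,p)) \ne c_N(y(0,q))$.

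I expect this last step to be the main obstacle, and two points will need care. The first is the interplay of quantifiers across levels: the range of $x$ may sit at any level $M$, so separation by $c_N$ must be available for arbitrarily large $N$. Here I would use the compatibility $c_N = q_N \circ c_{N+1}$ to observe that separation propagates upward, since $q_N$ is a well-defined map; thus once some $c_{N_0}$ distinguishes two paths, so does every $c_N$ with $N \ge N_0$, which collapses ``for some sufficiently large $N$'' to the plain ``for some $N$'' of the statement and lets one realise the base vertex at the bottom level. The second is checking cleanly that in the skew-product the group-coordinate condition genuinely decouples from the $\Lambda$-path condition, and that the base group element $h$ at the range of $x$ cancels, so that condition~(ii) can be phrased entirely downstairs in $\Lambda$. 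Assembling the cofinality and aperiodicity equivalences and feeding them into \cite{RobSi} then gives the stated characterisation.
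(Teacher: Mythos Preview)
Your proposal is correct and follows essentially the same approach as the paper: the paper likewise invokes the simplicity criterion of \cite{RobSi} and reduces the proof to showing that $\Gamma$ is cofinal iff each $\Lambda_n$ is (Lemma~\ref{lem:tgrph cofinal}) and that $\Gamma$ has no local periodicity iff condition~(ii) holds (Lemma~\ref{lem:tgrph aperiodic}), both resting on an explicit description of $\Gamma^\infty$ (Lemma~\ref{lem:inf paths}) matching your horizontal-slice picture. Your treatment of the case $a\ne b$, the reduction to the bottom level, and the upward propagation of cocycle separation via $c_N=q_N\circ c_{N+1}$ all appear in the paper's Lemma~\ref{lem:tgrph aperiodic} in essentially the same form.
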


The idea is to prove the theorem by appealing to
\cite[Theorem~3.1]{RobSi}. To do this, we will first describe the
infinite paths in $\tgrphlim(\Lambda_n,p_n)$. We identify
$\invlim(G_n, q_n)$ with the set of sequences $g =
(g_n)^\infty_{n=1}$ such that $q_n(g_{n+1}) = g_n$ for all $n$.

\begin{lem}\label{lem:inf paths}
Adopt the notation and assumptions~\ref{ntn:standing}. Fix $x
\in \Lambda^\infty$ and $g = (g_n)^\infty_{n=1} \in \invlim
(G_n, q_n)$. For each $n \in \N$ there is a unique infinite
path $(x,g_n) \in \Lambda_n^\infty$ determined by $(x,
g_n)(0,m) = (x(0,m), c_n(x(0,m))^{-1}g_n)$ for all $m \in
\N^k$. There is a unique infinite path $x^g \in
(\tgrphlim(\Lambda_n, p_n))^\infty$ such that $x^g(0, (m,0)) =
x(0,m)$ for all $m \in \N^k$ and $x^g(n e_{k+1}) = (x(0), g_n)$
for all $n \in \N$; moreover $\sigma^{ne_{k+1}}(x^g)(0,(m,0)) =
(x, g_n)(0,m)$ for all $m \in \N^k$. Finally, every infinite
path $y \in (\tgrphlim(\Lambda_n,p_n))^\infty$ is of the form
$\sigma^{ne_{k+1}}(x^g)$ for some $n \in \N$, $x \in
\Lambda^\infty$ and $g \in \invlim (G_n, q_n)$.
\end{lem}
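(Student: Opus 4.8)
The plan is to reduce everything to the standard fact that an infinite path in a $k$-graph is uniquely determined by a consistent family of initial segments $\{z(0,m)\}_{m\in\N^k}$, and then to feed the skew-product and covering structure into the consistency checks. I would begin with the slices $(x,g_n)\in\Lambda_n^\infty$. Since $\Lambda_n=\Lambda\times_{c_n}G_n$ and composition there is $(\mu,c_n(\nu)h)(\nu,h)=(\mu\nu,h)$, I would take the displayed formula $(x,g_n)(0,m):=(x(0,m),c_n(x(0,m))^{-1}g_n)$ as the definition and verify consistency: for $m\le m'$ the factorisation $x(0,m')=x(0,m)\,x(m,m')$ in $\Lambda$ and the cocycle identity $c_n(x(0,m'))=c_n(x(0,m))c_n(x(m,m'))$ combine to show that $(x,g_n)(0,m)$ is exactly the degree-$m$ initial segment of $(x,g_n)(0,m')$. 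This single computation in $G_n$ simultaneously produces the infinite path and gives its uniqueness, shows $(x,g_n)(0)=(x(0),g_n)$, and (reading the same argument backwards, since the range of an infinite path is constant) shows that \emph{every} infinite path of $\Lambda_n$ has skew-product form $(x',g')$ for a unique $x'\in\Lambda^\infty$ and $g'\in G_n$.

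Next I would construct $x^g$. The crucial structural input is that the degree-$e_{k+1}$ edges of $\tgrphlim(\Lambda_n,p_n)$ implement the covering maps $p_n$, together with $p_n(\lambda,h)=(\lambda,q_n(h))$. Using $q_n\circ c_{n+1}=c_n$ and $q_n(g_{n+1})=g_n$ (the latter because $g\in\invlim(G_n,q_n)$), I would check the compatibility $p_n\circ(x,g_{n+1})=(x,g_n)$, which is again one line in $G_n$. Because each $p_n$ is locally bijective, each vertex of $\Lambda_n$ is the range of a unique $e_{k+1}$-edge from the matching vertex of $\Lambda_{n+1}$, and horizontal paths lift uniquely through the covering. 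I would use this unique lifting to assemble $\{(x,g_n)\}_n$ into a single degree-preserving functor $x^g:\Omega_{k+1}\to\tgrphlim(\Lambda_n,p_n)$, defining $x^g(0,(m,l))$ as the unique path of degree $(m,l)$ whose successive horizontal slices are the $(x,g_n)$ joined by the canonical connecting edges, and then checking the $(k+1)$-graph factorisation property by playing the horizontal factorisations inside each $\Lambda_n$ off against the commuting squares of \cite{KPS} relating horizontal edges to the covering edges. The prescribed values $x^g(0,(m,0))=x(0,m)$ and $x^g(ne_{k+1})=(x(0),g_n)$ already pin down every horizontal slice, and local bijectivity leaves no freedom in the connecting edges, so $x^g$ is unique; the ``moreover'' identity $\sigma^{ne_{k+1}}(x^g)(0,(m,0))=(x,g_n)(0,m)$ is then immediate from the construction.

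Finally, for the surjectivity statement, let $y\in\tgrphlim(\Lambda_n,p_n)^\infty$. Since the $e_{k+1}$-edges move between consecutive levels and the levels are bounded below, $y(0)$ sits at a well-defined finite level $n_0$, and for $j\ge 0$ the horizontal slice $y_j:=\sigma^{je_{k+1}}(y)$ restricted to the $\N^k$-directions is an infinite path in $\Lambda_{n_0+j}$ with $p_{n_0+j}\circ y_{j+1}=y_j$. By the first paragraph each $y_j$ is of skew-product form; since $p_n$ fixes the $\Lambda$-coordinate, all the $y_j$ share one projection $x\in\Lambda^\infty$, and their $G$-coordinates at the range give a sequence with $q_m(g_{m+1})=g_m$ for $m\ge n_0$. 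Extending downwards by $g_m:=q_m\cdots q_{n_0-1}(g_{n_0})$ for $m<n_0$ produces $g\in\invlim(G_n,q_n)$, and matching slices level-by-level yields $y=\sigma^{ne_{k+1}}(x^g)$ for the shift $n$ determined by $n_0$. I expect the genuine obstacle to be the gluing in the second paragraph: turning the compatible family of slices into an honest functor on $\Omega_{k+1}$ and verifying the factorisation property across the vertical direction is exactly where the local bijectivity of the coverings (unique path lifting) is indispensable for reconciling horizontal and vertical factorisations, whereas the rest is bookkeeping with the cocycle identity.
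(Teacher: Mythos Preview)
Your proposal is correct and follows essentially the same route as the paper's proof: the paper cites \cite[Remarks~2.2]{KP} for the first assertion where you verify consistency of the segments directly, and for the construction of $x^g$ it phrases your unique-lifting argument as the observation that $\alpha_{g,n}\,(x,g_n)(0,m)$ is the unique minimal common extension of $x(0,m)$ and the vertical path $\alpha_{g,n}$, which is the same computation you describe. Your surjectivity argument (read off the level, extract the common $\Lambda$-coordinate, extend the $G$-coordinates downward by the $q_m$) matches the paper's exactly.
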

\begin{proof}
That the formula given determines unique infinite paths $(x,g_n)$, $n
\in \N$ follows from \cite[Remarks~2.2]{KP}. That there is a unique
infinite path $x^g$ such that $x^g(0, (m,0)) = x(0,m)$ for all $m \in
\N^k$ and $x^g(n e_{k+1}) = (x(0), g_n)$ for all $n \in \N$ follows
from the observation that for each $n \in \N$ there is a unique path
\[
\alpha = \alpha_{g,n} := e(x(0), g_1) e(x(0), g_2) \cdots e(x(0),
g_n)
\]
with $d(\alpha_{g,n}) = ne_{k+1}$, $r(\alpha) = x(0) \in \Lambda^0$
and $s(\alpha) = (x(0), g_n) \in \Lambda_n^0$, and that for each $m
\in \N^k$,
\begin{align*}
\alpha (x, g_n)(0,m) = x(0,m)& e(x(m), c_1(x(0,m))^{-1}g_1) \\ &\cdots
e(x(m),
c_n(x(0,m))^{-1}g_n)
\end{align*}
is the unique minimal common extension of $x(0,m)$ and $\alpha$. This
also establishes the assertion that $\sigma^{ne_{k+1}}(x^g)(0,(m,0))
= (x, g_n)(0,m)$ for all $m \in \N^k$.

For the final assertion, fix $y \in
(\tgrphlim(\Lambda_n,p_n))^\infty$. We must have $y(0) = (v, g_n)$
for some $v \in \Lambda^0$, $g_n \in G_n = \pi\Lambda/H_n$ and $n \in
\N$. Let $x \in \Lambda_n^\infty$ be the infinite path determined by
$x(0,m) := y(0, (m,0))$ for all $m \in \N^k$. By definition of
$\Lambda_n = \Lambda \times_{c_n} G_n$, we have $x(0,m) := (\alpha_m,
c_n(\alpha_m)^{-1}g_n)$ where each $\alpha_m \in v\Lambda^m$ and $g$
is the element of $\pi\Lambda$ such that $y(0) = v(g_n)$ as above.
There is then an infinite path in $x' \in \Lambda^\infty$ determined
by $x'(0,m) = \alpha_m$ for all $m \in \N^k$. For $n > i \ge 1$,
inductively define $g_i := q_i(g_{i+1})$, and for $n < i$ let $g_i$
be the unique element of $G_i$ such that $y((i-n)e_{k+1}) = (v,
g_i)$; that such $g_i$ exist follows from the definition of
$\tgrphlim(\Lambda_n,p_n)$. Then $g := (g_i)^\infty_{i=1}$ is an
element of $\invlim(G_n, q_n)$ by definition, and routine
calculations using the definitions of the $\Lambda_n$ show that $x =
\sigma^{ne_{k+1}}((x')^g)$.
\end{proof}

\begin{lem}\label{lem:tgrph cofinal}
Adopt the notation and assumptions~\ref{ntn:standing}. Then the
$(k+1)$-graph $\tgrphlim(\Lambda_n,p_n)$ is cofinal if and only if
each $\Lambda_n$ is cofinal.
\end{lem}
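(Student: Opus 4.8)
The plan is to reduce cofinality of the $(k+1)$-graph $\tgrphlim(\Lambda_n, p_n)$ to cofinality of its constituent levels, using the explicit description of infinite paths in Lemma~\ref{lem:inf paths} together with the factorisation property. Two structural facts drive the argument, and I will invoke them without re-deriving. First, by Lemma~\ref{lem:inf paths} every infinite path of the tower has the form $\sigma^{N e_{k+1}}(x^g)$, and a short computation from that lemma shows its vertex at position $M = (m',t) \in \N^{k+1}$ is $\sigma^{N e_{k+1}}(x^g)(M) = (x, g_{t+N})(m')$, which lives in the copy $\imath_{t+N}(\Lambda_{t+N})$; thus the last coordinate of $M$ records the level, and increasing it moves a vertex up the tower. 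Second, the edges of degree $e_{k+1}$ run from the higher level (their source) down to the lower level (their range), and the covering maps intertwine the levels: descending a vertex $(x, g_{t+N})(m')$ to level $n$ along the unique $e_{k+1}$-path yields $(x, g_n)(m')$, since $p_n(\lambda,g) = (\lambda, q_n(g))$, $q_n(g_{n+1}) = g_n$, and $q_n \circ c_{n+1} = c_n$.

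For the implication that cofinality of every $\Lambda_n$ forces cofinality of the tower, I would fix a vertex $w = \imath_a(u)$ at level $a$ and an infinite path $y = \sigma^{N e_{k+1}}(x^g)$. Choose $t$ with $t+N \ge a$ and apply cofinality of $\Lambda_a$ to the vertex $u$ and the infinite path $(x, g_a) \in \Lambda_a^\infty$; this produces $m' \in \N^k$ and a base path $\gamma \in u\Lambda_a (x,g_a)(m')$. Now $y(M)$ with $M = (m',t)$ is the vertex $(x, g_{t+N})(m')$ at level $t+N$, whose descent to level $a$ is exactly $(x,g_a)(m') = s(\gamma)$. Composing $\gamma$ with the unique $e_{k+1}$-path $\epsilon$ from $y(M)$ down to $(x,g_a)(m')$ produces a path $\gamma\epsilon \in w\,\tgrphlim(\Lambda_n,p_n)\,y(M)$, which is the required witness to cofinality.

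For the converse I would assume the tower is cofinal and fix $n$, a vertex $v \in \Lambda_n^0$, and an infinite path $z \in \Lambda_n^\infty$. Writing $z(0,m) = (\alpha_m, \beta_m)$ assembles an $x \in \Lambda^\infty$ with $x(0,m) = \alpha_m$ and pins down the level-$n$ group coordinate; using surjectivity of the $q_i$ I lift this to some $g \in \invlim(G_i, q_i)$ so that the level-$n$ base part of $x^g$ is exactly $z$. Applying cofinality of the tower to $\imath_n(v)$ and the infinite path $x^g$ yields $M = (m', t') \in \N^{k+1}$ and a path $\beta$ with range $\imath_n(v)$ and source $(x^g)(M) = (x, g_{t'})(m')$, where necessarily $t' \ge n$. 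Factoring $\beta = \beta'\beta''$ with $d(\beta') = (m',0)$ and $d(\beta'') = (0, t'-n)$, the factor $\beta'$ is a base path in $\Lambda_n$ with range $v$, while $\beta''$ is the pure $e_{k+1}$-descent carrying $(x,g_{t'})(m')$ down to $(x,g_n)(m') = z(m')$; hence $\beta' \in v\Lambda_n z(m')$ and $\Lambda_n$ is cofinal.

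I expect the main obstacle to be bookkeeping rather than conceptual: keeping the level indices, the orientation of the $e_{k+1}$-edges, and the action of the covering maps on the group coordinates mutually consistent, so that the descended source vertex of the path supplied by cofinality lands on $(x,g_n)(m') = z(m')$ exactly. The two delicate points are verifying consistency of the lift $g$ in the converse (that the group coordinate $g_n = c_n(\alpha_m)\beta_m$ is independent of $m$, which is already implicit in Lemma~\ref{lem:inf paths}) and confirming that the factorisation of $\beta$ separates cleanly into a single-level base path at level $n$ and a pure $e_{k+1}$-descent.
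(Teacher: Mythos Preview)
Your argument is correct and follows the same strategy as the paper's: in both directions you exploit Lemma~\ref{lem:inf paths} to pass between infinite paths in the tower and infinite paths $(x,g_n)$ at a fixed level, then connect via a base-level path composed with a pure $e_{k+1}$-descent, exactly as the paper does (the paper merely arranges the indices so that in the forward direction the descent is absorbed by working at the level of $w$, and in the converse it uses the explicit lift $g_i := gH_i$ rather than your appeal to surjectivity of the $q_i$). The only slip is notational: in the converse you write $d(\beta') = (m',0)$, but $m'$ is the base part of $M$, not of $d(\beta)$; the correct factorisation is $d(\beta')_{k+1}=0$, $d(\beta'') = d(\beta)_{k+1}\,e_{k+1}$, and your conclusion $s(\beta') = z(m')$---which depends only on $M$ via the descent of $(x^g)(M)$ to level $n$---then goes through unchanged.
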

\begin{proof}
First suppose that each $\Lambda_n$ is cofinal. Fix $y \in
\tgrphlim(\Lambda_n,p_n)$ and $w \in \tgrphlim(\Lambda^0)$. By
Lemma~\ref{lem:inf paths}, we have $y = \sigma^{i_0
e_{k+1}}(x^g)$ for some $g = (g_n)^\infty_{n=1} \in
\invlim(G_n, q_n)$, some $i_0 \in \N$ and some $x \in
\Lambda^\infty$. We must show that $w (\tgrphlim(\Lambda_n,
p_n)) y(q) \not= \emptyset$ for some $q$. We have $w \in
\Lambda_m^0$ for some $m \in \N$, so $w = (w', h)$ for some $h
\in G_m$. If $m < i_0$, fix any $h' \in \pi\Lambda$ such that
$h'H_{i_0} = h$, and note that $w (\tgrphlim(\Lambda_n, p_n))
(w', hH_{i_0})$ is nonempty, so that it suffices to show that
$(w', h'H_{i_0})(\tgrphlim(\Lambda_n, p_n)) y(q) \not=
\emptyset$ for some $q$. That is to say, we may assume without
loss of generality that $m \ge i_0$. But now $w \in
\Lambda_m^0$ and $\sigma^{(0, \dots, 0, m-i_0)}(y) \in
(\tgrphlim(\Lambda_n,p_n))^\infty$ with $r(y) \in
\Lambda_{i_0}^0$. Since $\Lambda_n$ is cofinal, we have $w
\Lambda_{i_0} (x, g_m)(q) \not= \emptyset$ for some $q \in
\N^k$ (recall that $x, (g_i)^\infty_{i=1}$ are such that $y =
\sigma^{i_0 e_{k+1}}(x^g)$). By definition, $(x, g_m)(q) =
y(q_1, \dots, q_k, m-i_0)$ and this shows that $w
(\tgrphlim(\Lambda_n, p_n)) y(q) \not= \emptyset$ for $q =
(q_1, \dots, q_k, m-n)$.

Now suppose that $\tgrphlim(\Lambda_n,p_n)$ is cofinal. Fix $n
\in \N$ and a vertex $w$ and an infinite path $x$ in
$\Lambda_n$. Then $x(0) = (v, gH_n)$ for some $v \in
\Lambda^0$, $g \in \pi\Lambda$. There are paths $\alpha_m \in
\Lambda_n^m$, $m \in \N^k$ determined by $x(0,m) = (\alpha_m,
c_n(\alpha_m)^{-1} g H_n)$; there is then an infinite path $x'
\in \Lambda^\infty$ such that $x'(0,m) = \alpha_m$ for all $m$.
Let $g_i := gH_i$ for all $i \in \N$. By abuse of notation we
denote by $g$ the element $(gH_i)^\infty_{i=1}$ of
$\invlim(G_n, q_n)$. Let $y = \sigma^n((x')^g)$ be the infinite
path of $\tgrphlim(\Lambda_n, p_n)$ provided by
Lemma~\ref{lem:inf paths}. As $\tgrphlim(\Lambda_n, p_n)$ is
cofinal, we may fix a path $\lambda \in
\tgrphlim(\Lambda_n,p_n)$ such that $r(\lambda) = w$ and
$s(\lambda)$ lies on $y$. By definition of $y$, there exist $n'
\ge n$  and $m \in \N^k$ such that $s(\lambda) = (x'(m),
c_{n'}(\alpha_m)^{-1}g_{n'})$. We then have $d(\lambda)_{k+1} =
n' - n$, and we may factorise $\lambda = \lambda' \lambda''$
where $d(\lambda') = d(\lambda) - (n' - n)e_{k+1}$ and
$d(\lambda'') = (n' - n)e_{k+1}$. By construction of
$\tgrphlim(\Lambda_n, p_n)$, if $d(\mu) = je_{k+1}$ and $s(\mu)
= (v, gH_n) \in \Lambda_n^0$ then $n \ge j$ and $r(\mu) = (v,
gH_{n-j}) \in \Lambda^0_{n-j}$. In particular,
\[
s(\lambda') = r(\lambda'') = (x'(m), c_{n}(\alpha_m)^{-1}g_{n}) =
x(m),
\]
so $w \Lambda_n x(m) \not=\emptyset$.
\end{proof}

\begin{lem}\label{lem:tgrph aperiodic}
Adopt the notation and assumptions~\ref{ntn:standing}. Then the
$(k+1)$-graph $\tgrphlim(\Lambda_n,p_n)$ has no local periodicity if
and only if it satisfies condition~\ref{item:separable} of
Theorem~\ref{thm:simple}.
\end{lem}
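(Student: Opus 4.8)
Write $\Delta := \tgrphlim(\Lambda_n, p_n)$. The plan is to translate ``no local periodicity of $\Delta$'' into a statement about $\Lambda$ and the cocycle $c$, using the explicit description of $\Delta^\infty$ from \lemref{lem:inf paths}. The two facts I would lean on are: every $y \in \Delta^\infty$ with range at level $j$ is given, as in \lemref{lem:inf paths}, by some $x \in \Lambda^\infty$ together with a compatible tail $(g_l)_{l \ge j}$ of a point of $G = \invlim(G_n,q_n)$, its horizontal content at each level $l \ge j$ being the path $(x,g_l) \in \Lambda_l^\infty$; and two infinite paths of $\Delta$ with the same range coincide precisely when their horizontal contents agree at every level. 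Establishing this second fact carefully from \lemref{lem:inf paths} --- the vertical edges of the tower being forced by the covering maps once the vertices are known --- is where I would begin.

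First I would reduce to shifts with equal last coordinate. If $M = (p,a)$ and $N = (q,b)$ with $a \ne b$, then for every $y$ the vertices $y(M)$ and $y(N)$ lie at different levels of the tower, so $\sigma^M(y) \ne \sigma^N(y)$; hence $\Delta$ has no local periodicity of this type and such shifts may be ignored. For $M = (p,a)$, $N = (q,a)$ with $p \ne q$ I would factor $\sigma^{(p,a)} = \sigma^{(p,0)} \circ \sigma^{(0,a)}$: as $y$ runs over $w\Delta^\infty$, the path $\sigma^{(0,a)}(y)$ runs over all infinite paths whose range lies in the fibre over $w$ that sits $a$ levels higher, so local periodicity $(p,a),(q,a)$ at $w$ is equivalent to horizontal local periodicity $(p,0),(q,0)$ at every vertex of that fibre. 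This reduces everything to the purely horizontal shifts $(p,0),(q,0)$ at vertices of arbitrary level.

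The core computation is then the horizontal one. From $(x,g_l)(0,m) = (x(0,m), c_l(x(0,m))^{-1}g_l)$ a short calculation gives $\sigma^p((x,g_l)) = (\sigma^p(x), c_l(x(0,p))^{-1}g_l)$, so that $\sigma^{(p,0)}(y)$ and $\sigma^{(q,0)}(y)$ agree iff $\sigma^p(x) = \sigma^q(x)$ and $c_l(x(0,p)) = c_l(x(0,q))$ at every level $l \ge j$ visible to $y$. The decorations $g_l$ and the base vertex cancel, leaving: $\Delta$ has local periodicity $(p,0),(q,0)$ at a vertex over $v \in \Lambda^0$ at level $j$ iff for every $x \in v\Lambda^\infty$ one has $\sigma^p(x) = \sigma^q(x)$ and $c_l(x(0,p)) = c_l(x(0,q))$ for all $l \ge j$.

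Finally I would negate and assemble. By the previous step, $\Delta$ fails to be locally periodic iff for all $v$, all $p \ne q$, and all $j$ there is either some $x \in v\Lambda^\infty$ with $\sigma^p(x) \ne \sigma^q(x)$, or some $x$ and some $l \ge j$ with $c_l(x(0,p)) \ne c_l(x(0,q))$. For pairs $p,q$ at which $\Lambda$ is not locally periodic at $v$ the first alternative holds for free, so only the genuinely periodic pairs matter, and for those the surviving requirement is that some quotient of the cocycle separate $x(0,p)$ from $x(0,q)$ --- which is condition~\ref{item:separable}. The step that eliminates the level parameter $j$ is the tower compatibility $c_N = q_N \circ c_{N+1}$, so that $c_N$ factors through $c_l$ for every $l \ge N$: a single witness $c_N(x(0,p)) \ne c_N(x(0,q))$ then forces $c_l(x(0,p)) \ne c_l(x(0,q))$ for all $l \ge N$, breaking periodicity at every level simultaneously. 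I expect the main obstacle to be precisely the bookkeeping of the first two paragraphs --- confirming from \lemref{lem:inf paths} that equality in $\Delta^\infty$ is detected level by level, and tracking the fibres and level shifts correctly when $a > 0$ --- after which the equivalence with condition~\ref{item:separable} is immediate.
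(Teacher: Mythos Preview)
Your proposal is correct and follows essentially the same route as the paper: both first dispose of the case $p_{k+1}\ne q_{k+1}$ by observing the shifted paths land at different levels, then reduce to purely horizontal shifts $(p,0),(q,0)$, and finally use \lemref{lem:inf paths} to compute that $\sigma^{(p,0)}(y)=\sigma^{(q,0)}(y)$ amounts to $\sigma^p(x)=\sigma^q(x)$ in $\Lambda$ together with $c_l(x(0,p))=c_l(x(0,q))$ at every relevant level; the monotonicity observation $c_N\ne\Rightarrow c_l\ne$ for $l\ge N$ is exactly the step the paper uses to pass from a single witness to all higher levels. Your organisation is a little more systematic about the level parameter $j$, but the substance is the same. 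One small point: the cocycle condition you (and the paper's own proof) actually use is $c_N(x(0,p))\ne c_N(x(0,q))$, whereas the theorem statement is phrased with $c_N(x(p,p+l))\ne c_N(x(q,q+l))$; under the hypothesis $\sigma^p(x)=\sigma^q(x)$ these segments coincide, so the latter is never satisfiable as written --- the proof version is the intended one.
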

\begin{proof}
First suppose that condition~\ref{item:separable} of
Theorem~\ref{thm:simple} holds. Fix a vertex $v \in
(\tgrphlim(\Lambda_n,p_n))^0$ and $p \not= q \in \N^{k+1}$. So
$v \in \Lambda_n^0$ for some $n$, and $v$ therefore has the
form $v = (w, gH_n)$ for some $w \in \Lambda^0$ and $g \in
\pi\Lambda$. We must show that there exists $x \in
v(\tgrphlim(\Lambda_n,p_n))^\infty$ such that $\sigma^p(x)
\not= \sigma^q(x)$.

We first consider the case where $p_{k+1} \not= q_{k+1}$. By
construction of the tower graph $\tgrphlim(\Lambda_n,p_n)$,
this forces the vertices $x(p)$ and $x(q)$ to lie in distinct
$\Lambda_n$ for any $x \in v(\tgrphlim(\Lambda_n,p_n))^\infty$;
in particular they cannot be equal.

Now suppose that $p_{k+1} = q_{k+1}$. If every $x \in
v(\tgrphlim(\Lambda_n,p_n))^\infty$ satisfies $\sigma^p(x) =
\sigma^q(x)$, then for any $\alpha \in
v(\tgrphlim(\Lambda_n,p_n))^{p_{k+1} e_{k+1}}$ and any $y \in
s(\alpha)(\tgrphlim(\Lambda_n,p_n))^\infty$, we have
$\sigma^p(\alpha y) = \sigma^q(\alpha y)$; that is,
\[
\sigma^{p - p_{k+1} e_{k+1}}(y) = \sigma^{q - q_{k+1} e_{k+1}}(y)
\quad\text{ for all $y \in
s(\alpha)(\tgrphlim(\Lambda_n,p_n))^\infty$.}
\]
So we may assume without loss of generality that $p_{k+1} = q_{k+1} =
0$. Write $p'$ and $q'$ for the elements of $\N^k$ whose entries are
the first $k$ entries of $p$ and $q$.

We have $v \in \Lambda_n$ for some $n$, so there exists $w \in
\Lambda^0$ and $g \in \pi\Lambda$ such that $v = (w, gH_n)$. Suppose
first that there exists $x \in w\Lambda^\infty$ such that
$\sigma^{p'}(x) \not= \sigma^{q'}(x)$, then the infinite path
$(x,gH_n) \in v\Lambda_n^{\infty}$ such that
\[
(x,gH_n)(0,m) := (x(0,m), c_n(x(0,m))^{-1}gH_n)\quad\text{ for all
$m
\in
\N^k$}
\]
also satisfies $\sigma^{p'}((x,gH_n)) \not= \sigma^{q'}((x,gH_n))$.
By Lemma~\ref{lem:inf paths} we may choose an infinite path $y$ such
that $y|_{\N^k \times\{0\}} = (x, gH_n)$, and then $y \in
v(\tgrphlim(\Lambda_n,p_n))^\infty$ satisfies $\sigma^p(y) \not=
\sigma^q(y)$.

Now suppose that every path $x \in w\Lambda^\infty$ satisfies
$\sigma^{p'}(x) = \sigma^{q'}(x)$. Then by
condition~\ref{item:separable} of Theorem~\ref{thm:simple}, we may
fix $x \in w\Lambda^\infty$ and $N \in \N$ such that $c_N(x(0,p'))
\not= c_N(x(0,q'))$. It then follows from the definition of the $c_j$
that $c_j(x(0,p')) \not= c_j(x(0,q'))$ whenever $j \ge N$. So with $j
:= \max\{N,n\}$, we have
\begin{align*}
(x,gH_j)(p') &= (x(p'), c_j(x(0,p'))^{-1}gH_j) \\
&\not= (x(q'), c_j(x(0,q'))^{-1}gH_j) = (x,gH_j)(q').
\end{align*}
There is an element $g = (g_i)^\infty_{i=1}$ of $\invlim(G_n, q_n)$
determined by $g_i := gH_i$ for all $i$. Let $x^g$ be the element of
$(\tgrphlim(\Lambda_n, p_n))^\infty$ determined by $x$ and $g$ as in
Lemma~\ref{lem:inf paths}. Then $(x,gH_n)((j-n)e_{k+1} + p) \not=
(x,gH_n)((j-n)e_{k+1} + q)$, and therefore $x^g$ satisfies
$\sigma^{p}(x^g) \not= \sigma^q(x^g)$ as required. Hence
condition~\ref{item:separable} of Theorem~\ref{thm:simple} implies
that $\tgrphlim(\Lambda_n,p_n)$ has no local periodicity.

To show that if $\tgrphlim(\Lambda_n,p_n)$ has no local periodicity
then condition~\ref{item:separable} of Theorem~\ref{thm:simple}
holds, we prove the contrapositive statement. Suppose that
condition~\ref{item:separable} of Theorem~\ref{thm:simple} does not
hold. Fix $v \in \Lambda^0$ and $p,q \in \N^k$ such that $\sigma^p(x)
= \sigma^q(x)$ for all $x \in v\Lambda^\infty$ and $c_n(x(p, p+l)) =
c_n(x(q, q+l))$ for all $n \in \N$, $l \in \N^k$. Then for each $x
\in v\Lambda^\infty$ and each $g = (g_n)^\infty_{n=1} \in
\invlim(G_n, p_n)$, we have $\sigma^p(x, g_n)(0,l) = \sigma^q(x,
g_n)(0,l)$ for all $n \in \N$ and $l \in\N^k$. Hence
Lemma~\ref{lem:inf paths} implies that every $y \in
v(\tgrphlim(\Lambda_n, p_n))^\infty$ satisfies $\sigma^{(p,0)}(y) =
\sigma^{(q,0)}(y)$.
\end{proof}

\begin{proof}[Proof of Theorem~5.1]
Theorem~3.1 of \cite{RobSi} implies that
$C^*(\tgrphlim(\Lambda_n, p_n))$ is simple if and only if
$\tgrphlim(\Lambda_n, p_n)$ is cofinal and has no local
periodicity. The result then follows directly from Lemmas
\ref{lem:tgrph cofinal}~and~\ref{lem:tgrph aperiodic}.
\end{proof}

\section{Projective limit $k$-graphs}\label{sec:topological
k-graphs} Let $(\Lambda_n, \Lambda_{n+1}, p_n)^\infty_{n=1}$ be
a sequence of row-finite coverings of $k$-graphs with no
sources as in Section~\ref{sec:covering systems}. We aim to
show that the sets $(\varprojlim \Lambda_i)^m := \varprojlim
(\Lambda_i^m, p_i)$ under the projective limit topology with
the natural (coordinate-wise) range and source maps specify a
topological $k$-graph (in the sense of Yeend). Moreover, we
show that the associated topological $k$-graph $C^*$-algebra is
isomorphic to the full corner $P_0 C^*(\tgrphlim(\Lambda_n;
p_n)) P_0$ determined by $P_0 := \sum_{v \in \Lambda^0_1} s_v$.
In particular, when the $\Lambda_n$ and $p_n$ are as
in~\ref{ntn:standing}, the $C^*$-algebra of the projective
limit topological $k$-graph is isomorphic to the crossed
product of $C^*(\Lambda)$ by the coaction of the projective
limit of the groups $G_i$ obtained from Theorem~\ref{thm:ccp
continuous}.

Let $(\Lambda_n, \Lambda_{n+1}, p_n)^\infty_{n=1}$ be a sequence of
row-finite coverings of $k$-graphs with no sources. Let
$\varprojlim(\Lambda_i, p_i)$ be the projective limit category,
equipped with the projective limit topology. That is,
$\varprojlim(\Lambda_i, p_i)$ consists of all sequences
$(\lambda_i)^\infty_{i=1}$ such that each $\lambda_i \in \Lambda_i$
and $p_i(\lambda_{i+1}) = \lambda_i$; the structure maps $\tilde{r}$,
$\tilde{s}$, $\tilde{\circ}$ and $\widetilde{\id}$ on
$\varprojlim(\Lambda_i, p_i)$ are obtained by pointwise application
of the corresponding structure maps for $\Lambda$. The cylinder sets
$Z(\lambda_1, \dots, \lambda_j) := \{(\mu_i)^\infty_{i=1} \in
\varprojlim(\Lambda_i, p_i) : \mu_i = \lambda_i\text{ for }1 \le i
\le j\}$, form a basis of compact open sets for a locally compact
Hausdorff topology.

Define $\tilde{d} : \varprojlim(\Lambda_i, p_i) \to \N^k$ by
$\tilde{d}((\lambda_i)^\infty_{i=1}) := d(\lambda_1)$. Since the
$p_i$ are degree-preserving, we have
\[
\tilde{d}((\lambda_i)^\infty_{i=1}) = d(\lambda_i)\quad\text{for all
$i
\ge 1$.}
\]
For fixed $\lambda = (\lambda_i)^\infty_{i=1} \in
\varprojlim(\Lambda_i, p_i)^{m+n}$, the unique factorisation property
for each $\lambda_i$ produces unique elements $\lambda(0,m) :=
(\lambda_i(0,m))^\infty_{i=1} \in \varprojlim(\Lambda_i, p_i)^m$ and
$\lambda(m,n) := (\lambda_i(m,n))^\infty_{i=1} \in
\varprojlim(\Lambda_i, p_i)^n$ such that $\lambda =
\lambda(0,m)\lambda(m,n)$; that is, $(\varprojlim(\Lambda_i, p_i),
\tilde{d})$ is a second-countable small category with a degree
functor satisfying the factorisation property.

The identity $\tilde{d}((\lambda_i)^\infty_{i=1}) = d(\lambda_i)
\quad \text{for all $i \ge 1$}$ implies that $Z(\lambda_1, \dots,
\lambda_j)$ is empty unless $d(\lambda_1) = \dots = d(\lambda_j)$,
and it follows that $\tilde{d}$ is continuous.

We claim that $\tilde{r}$ and $\tilde{s}$ are local homeomorphisms.
To see this, fix a cylinder set $Z(v_1, \dots, v_j) \subset
\varprojlim(\Lambda_i, p_i)^0$, and for $\lambda \in v_1\Lambda_1$
and $2 \le l \le j$, let $v_lp^{-1}_{1,l}(\lambda)$ be the unique
element of $v_l\Lambda_l$ such that $p_1\circ p_2 \circ\cdots\circ
p_{l-1}(v_lp^{-1}_{1,l}(\lambda)) = \lambda$. Then
\[
\tilde{r}^{-1}(Z(v_1, \dots, v_j)) \cap \varprojlim(\Lambda_i,
p_i)^n
:=
\sqcup_{\lambda \in v_1\Lambda_1^n} Z(\lambda,
v_2p_{1,2}^{-1}(\lambda),
\dots, v_jp_{1,j}^{-1}(\lambda))
\]
which is clearly open, showing that $\tilde{r}$ is continuous.
Moreover, this same formula shows that for $\lambda =
(\lambda_i)^\infty_{i=1} \in \varprojlim(\Lambda_i, p_i)$, the
restriction of $\tilde{r}$ to $Z(\lambda_1)$ is a homeomorphism, and
$\tilde{r}$ is a local homeomorphism as claimed. A similar argument
shows that $\tilde{s}$ is also a local homeomorphism.

It is easy to see that the inverse image under composition of the
cylinder set $Z(\lambda_1, \dots, \lambda_j) \in
\varprojlim(\Lambda_i, p_i)^n$ is equal to the disjoint union
\[
\bigsqcup_{p+q = n} Z(\lambda_1(0,p), \dots, \lambda_j(0,p)) \times
Z(\lambda_1(p,q),\dots, \lambda_j(p,q))
\]
of cartesian products of cylinder sets and hence is open, so that
composition is continuous, and it follows that
$(\varprojlim(\Lambda_i, p_i), \tilde{d})$ is a topological $k$-graph
in the sense of Yeend \cite{Y1, Y2}.

Let $\tgrphlim(\Lambda_n; p_n)$ be as described in
Section~\ref{sec:covering systems}, and let $P_0$ denote the
full projection $\sum_{v \in \Lambda^0_1} s_v \in
M(C^*(\tgrphlim(\Lambda_n; p_n)))$. For the following
proposition, we need to describe $P_0 C^*(\tgrphlim(\Lambda_n;
p_n)) P_0$ in detail. For $n \ge m \ge 1$, we write $p_{m,n} :
\Lambda_n \to \Lambda_m$ for the covering map $p_{m,n} := p_m
\circ \dots \circ p_{n-1}$, with the convention that $p_{n,n}$
is the identity map on $\Lambda_n$. For $v \in \Lambda_m^0$,
and $l \le m$, we denote by $\alpha_{l,m}(v)$ the unique path
in $\tgrphlim(\Lambda_n; p_n)^{(m-l)e_{k+1}}$ whose source is
$v$ (and whose range is $p_{l,m}(v)$). In particular,
$\alpha_{1,m}(v)$ the unique path in $\tgrphlim(\Lambda_n;
p_n)^{(m-1)e_{k+1}}$ whose source is $v$ with range in
$\Lambda_1$. For $\lambda \in \Lambda_m$,
\begin{align*}
s_{\alpha_{1,m}(r(\lambda))} s^*_{\alpha_{1,m}(r(\lambda))} s_{p_{1,m}(\lambda)}
  &= s_{\alpha_{1,m}(r(\lambda))} s_\lambda s^*_{\alpha_{1,m}(s(\lambda))} \\
  &= s_{p_{1,m}(\lambda)} s_{\alpha_{1,m}(s(\lambda))} s^*_{\alpha_{1,m}(s(\lambda))}.
\end{align*}
Furthermore, $P_0 C^*(\tgrphlim(\Lambda_n, p_n)) P_0$ is equal to the
closed span
\[
P_0 C^*(\tgrphlim(\Lambda_n, p_n)) P_0
 = \clsp\{s_{\alpha_{1,m}(r(\lambda))} s_\lambda
 s^*_{\alpha_{1,m}(s(\lambda))} :
          m \ge 1, \lambda \in \Lambda_m\}.
\]

\begin{prop}\label{prp:tower and topological k-graph}
Let $(\Lambda_n, \Lambda_{n+1}, p_n)^\infty_{n=1}$ be a
sequence of row-finite coverings of $k$-graphs with no sources,
and let $\tgrphlim(\Lambda_n; p_n)$ be the associated
$(k+1)$-graph as in \cite{KPS}. Let $P_0 := \sum_{v \in
\Lambda^0_1} s_v \in M C^*(\tgrphlim(\Lambda_n; p_n))$. Let
$(\varprojlim(\Lambda_i, p_i), \tilde{d})$ be the topological
$k$-graph defined above. Then there is a unique isomorphism
\[
\pi : P_0 C^*(\tgrphlim(\Lambda_n,p_n)) P_0 \to
C^*(\varprojlim(\Lambda_i,
p_i))
\]
such that for $\lambda \in \Lambda_m$,
\begin{equation}\label{eq:pi def}
\pi(s_{\alpha_{1,m}(r(\lambda))} s_\lambda
s^*_{\alpha_{1,m}(s(\lambda))})
 = \chi_{Z(p_{1,m}(\lambda), p_{2,m}(\lambda), \dots,
 p_{m-1,m}(\lambda),\lambda)}.
\end{equation}
In particular, with the notation and
assumptions~\eqref{ntn:standing}, there is an isomorphism of
the $C^*$-algebra $C^*(\varprojlim(\Lambda_i, p_i))$ of the
topological $k$-graph $\varprojlim(\Lambda_i, p_i)$ with the
coaction crossed-product $C^*(\Lambda) \times_\delta G$.
\end{prop}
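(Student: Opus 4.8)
The plan is to construct the isomorphism $\pi$ directly on the spanning elements via the formula \eqref{eq:pi def} and verify it is well-defined, multiplicative, $*$-preserving, and bijective, thereby identifying the full corner with the Cuntz--Krieger-type algebra of the topological $k$-graph $\varprojlim(\Lambda_i, p_i)$. First I would fix the explicit description of $P_0 C^*(\tgrphlim(\Lambda_n, p_n)) P_0$ already established in the excerpt, namely the closed span of elements $s_{\alpha_{1,m}(r(\lambda))} s_\lambda s^*_{\alpha_{1,m}(s(\lambda))}$ for $m \ge 1$ and $\lambda \in \Lambda_m$. The target generators are the characteristic functions $\chi_{Z(p_{1,m}(\lambda), \dots, p_{m-1,m}(\lambda), \lambda)}$ of cylinder sets; since the cylinder sets form a basis of compact open sets for the locally compact Hausdorff topology on $\varprojlim(\Lambda_i, p_i)$, these characteristic functions generate $C^*(\varprojlim(\Lambda_i, p_i))$ as required.

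The key technical point is that the assignment on generators is well-defined, i.e.\ independent of the level $m$ at which $\lambda$ is represented. Concretely, if $\lambda \in \Lambda_m$ and $\mu = p_{m,m+1}^{-1}$-preimages refine $\lambda$, one must check that $s_{\alpha_{1,m}(r(\lambda))} s_\lambda s^*_{\alpha_{1,m}(s(\lambda))}$ equals the sum over the preimages $\lambda'$ of $\lambda$ under $p_m$ of the corresponding level-$(m+1)$ elements, and that this mirrors the relation $\chi_{Z(\dots,\lambda)} = \sum_{p_m(\lambda') = \lambda} \chi_{Z(\dots,\lambda')}$ among cylinder-set indicator functions. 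This compatibility is exactly the content of the defining covering relation for the edges of degree $e_{k+1}$ in the tower graph, together with the disjoint-union decomposition of $\tilde{r}^{-1}(Z(\cdots))$ computed above; I would verify it using the two displayed identities preceding the proposition, which encode how $s_{\alpha_{1,m}}$ intertwines $s_\lambda$ with $s_{p_{1,m}(\lambda)}$.

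Next I would check that $\pi$ respects the algebraic structure. Multiplicativity reduces, via the factorisation property, to matching the product $s_{\alpha_{1,m}(r(\lambda))} s_\lambda s^*_{\alpha_{1,m}(s(\lambda))} \cdot s_{\alpha_{1,m}(r(\mu))} s_\mu s^*_{\alpha_{1,m}(s(\mu))}$ (after raising both to a common level $m$) with the product $\chi_{Z(\cdots,\lambda)} \chi_{Z(\cdots,\mu)}$ of the corresponding cylinder-set functions; the latter is nonzero precisely when $s(\lambda) = r(\mu)$ in $\Lambda_m$, in which case it equals the indicator of the composite, and the former collapses analogously because $s^*_{\alpha_{1,m}(s(\lambda))} s_{\alpha_{1,m}(r(\mu))} = \delta_{s(\lambda), r(\mu)} s_{s(\lambda)}$. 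The $*$-preservation is immediate from the symmetric form of the generators. Surjectivity follows since the cylinder-set indicators span a dense subalgebra of $C^*(\varprojlim(\Lambda_i, p_i))$, and I would obtain injectivity either from a gauge-invariant uniqueness theorem for topological $k$-graph algebras in the sense of Yeend \cite{Y1, Y2}, or by exhibiting a two-sided inverse on generators.

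The main obstacle I anticipate is the well-definedness and level-independence in the second paragraph: one must carefully reconcile the combinatorics of the paths $\alpha_{1,m}(v)$ of degree $(m-1)e_{k+1}$ in the tower graph with the projective-limit description of cylinder sets, ensuring that refining $\lambda$ to its preimages under $p_m$ produces precisely the partition of the cylinder set $Z(p_{1,m}(\lambda), \dots, \lambda)$ dictated by the topology. Once this compatibility is pinned down, the remaining verifications are routine applications of the $k$-graph relations. For the final assertion, I would simply combine this isomorphism with Corollary~\ref{cor:tower cong ccp dir lim} (equivalently Theorem~\ref{thm:tower cong ccp}), which already identifies $P_0 C^*(\tgrphlim(\Lambda_n, p_n)) P_0$ with $C^*(\Lambda) \times_\delta G$ under the assumptions~\ref{ntn:standing}, yielding the desired isomorphism $C^*(\varprojlim(\Lambda_i, p_i)) \cong C^*(\Lambda) \times_\delta G$.
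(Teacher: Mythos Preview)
Your outline has the right final step (combining with Theorem~\ref{thm:tower cong ccp}), but the construction of $\pi$ itself has a genuine gap and differs substantially from the paper's argument.

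The central difficulty you do not address is \emph{how} to obtain a $*$-homomorphism out of the corner $P_0 C^*(\tgrphlim(\Lambda_n,p_n)) P_0$. This corner is not given by generators and relations; it is defined as a corner of a universal algebra. Checking that your proposed images ``multiply correctly'' on the spanning family $s_{\alpha_{1,m}(r(\lambda))} s_\lambda s^*_{\alpha_{1,m}(s(\lambda))}$ is not enough to produce a homomorphism: these elements are not free generators, and the relations among them (coming from the full Cuntz--Krieger relations of the $(k+1)$-graph) are not captured by the level-refinement and composability checks you describe. The paper handles this by invoking Allen's results \cite{A}: the corner is identified with a universal object $C^*(\tgrphlim(\Lambda_n;p_n),\Lambda_1^0)$ which \emph{does} have a presentation by generators $T_{\mu,\nu}$ and explicit Cuntz--Krieger-type relations, so that one obtains $\psi$ out of it simply by exhibiting a Cuntz--Krieger $(\tgrphlim(\Lambda_n;p_n),\Lambda_1^0)$-family in the target.

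A second issue is the target algebra. You treat $\chi_{Z(\dots,\lambda)}$ as characteristic functions of cylinder sets in $\varprojlim(\Lambda_i,p_i)$ and reason about their products as though they were pointwise, but $C^*(\varprojlim(\Lambda_i,p_i))$ is by definition the groupoid $C^*$-algebra $C^*(\GG_\Gamma)$, and the relevant elements are characteristic functions of bisections $Z_{\GG_\Gamma}(U_{m,\mu} *_s U_{m,\nu}, d(\mu)-d(\nu))$ under the convolution product. The paper works explicitly in this groupoid model: it writes down the elements $u_{\alpha_{1,m}(r(\mu))\mu,\alpha_{1,m}(r(\nu))\nu} \in C_c(\GG_\Gamma)$, verifies (by ``tedious but routine'' convolution computations) that they form a Cuntz--Krieger family for Allen's relative algebra, and thereby gets $\psi$ from the universal property. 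Injectivity then comes from Allen's gauge-invariant uniqueness theorem \cite[Theorem~3.5]{A} applied to the \emph{domain}; your suggestion to use a gauge-invariant uniqueness theorem for topological $k$-graphs would run in the wrong direction, controlling maps out of $C^*(\GG_\Gamma)$ rather than into it. Surjectivity is as you say: the image contains the indicators of a basis for the groupoid topology.

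Your alternative of ``exhibiting a two-sided inverse on generators'' could in principle be made to work, but it would require a universal property for $C^*(\GG_\Gamma)$ in order to define the inverse, and you would still need something like Allen's theorem (or an explicit direct-limit description of the corner) to verify the composite is the identity on $P_0 C^*(\tgrphlim)P_0$.
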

\begin{proof}
The final statement will follow from Theorem~\ref{thm:tower cong ccp}
once we establish the first statement.

To prove the first statement we will use Allen's
gauge-invariant uniqueness theorem for corners in $k$-graph
algebras~\cite{A}. We adopt Allen's notation: for $\mu,\nu \in
\Lambda_1^0 \tgrphlim(\Lambda_n; p_n)$, we let $t_{\mu,\nu} :=
s_\mu s^*_\nu \in P_0 C^*(\tgrphlim(\Lambda_n;p_n)) P_0$. The
factorisation property guarantees that for $\mu,\nu \in
\Lambda_1^0 \tgrphlim(\Lambda_n; p_n)$, we can rewrite $\mu =
\alpha_{1,m}(r(\mu'))\mu'$ and $\nu =
\alpha_{1,m}(r(\nu'))\nu'$ for some $m \ge 1$ and $\mu',\nu'
\in \Lambda_m$ with $s(\mu') = s(\nu')$. By
\cite[Corollary~3.7]{A}, there is an isomorphism $\theta$ of
$P_0 C^*(\tgrphlim(\Lambda_n;p_n)) P_0$ onto Allen's universal
algebra $C^*(\tgrphlim(\Lambda_n; p_n), \Lambda_1^0)$ (see
Definition~3.1 and the following paragraphs in \cite{A}) which
satisfies $\theta(t_{\mu,\nu}) = T_{\mu,\nu}$ for all
$\mu,\nu$. It therefore suffices to show that there is an
isomorphism $\psi : C^*(\tgrphlim(\Lambda_n;p_n), \Lambda_1^0)
\to C^*(\varprojlim(\Lambda_i, p_i))$ such that
$\psi(T_{\alpha_{1,m}(r(\mu)) \mu, \alpha_{1,m}(r(\nu))\nu}) =
\chi_{Z(p_{1,m}(\mu), \dots, \mu) *_s
Z(p_{1,m}(\nu),\dots,\nu)}$ for all $m \ge 1$ and $\mu,\nu \in
\Lambda_m$ with $s(\mu) = s(\nu)$; the composition $\pi := \psi
\circ \theta$ clearly satisfies~\eqref{eq:pi def}, and it is
uniquely specified by~\eqref{eq:pi def} because the elements
$\{t_{\alpha_{1,m}(r(\lambda))\lambda,
\alpha_{1,m}(s(\lambda))} : m \ge 1, \lambda \in \Lambda_m\}$
generate $P_0 C^*(\tgrphlim(\Lambda_n;p_n)) P_0$ as a
$C^*$-algebra.

Let $\Gamma$ denote the topological $k$-graph $\varprojlim(\Lambda_i,
p_i)$. Since $\Gamma$ is row-finite and has no sources,
$\partial\Gamma = \Gamma^\infty$. As in \cite{Y1}, for open subsets
$U, V \subset \Gamma$, let $Z_{\GG_\Gamma}(U *_s V, m)$ denote the
set $\{(\mu x, m, \nu x) : \mu \in U, \nu \in V, x \in \Gamma^\infty,
s(\mu) = s(\nu) = r(x)\}$. Then $\GG_\Gamma$ is the locally compact
Hausdorff topological groupoid
\[
\GG_\Gamma = \{(x,m-n,y) : x,y \in \Gamma^\infty, m,n \in \N^k,
\sigma^m(x) = \sigma^n(y)\}
\]
where the $Z_{\GG_\Gamma}(U *_s V, m)$ form a basis of compact open
sets for the topology.

For $m \ge 1$ and $\lambda \in \Lambda_m$, let $U_{m,\lambda} :=
Z(p_{1,m}(\lambda), \dots, \lambda) \subset \Gamma$. So the
$U_{m,\lambda}$ are a basis for the topology on $\Gamma =
\varprojlim(\Lambda_i, p_i)$. Now for $m \ge 1$ and $\mu,\nu \in
\Lambda_m$ with $s(\mu) = s(\nu)$, let
\[
u_{\alpha_{1,m}(r(\mu)) \mu, \alpha_{1,m}(r(\nu))\nu} :=
\chi_{Z(U_{m,\mu}
*_s U_{m,\nu}, d(\mu) - d(\nu))} \in C_c(\GG_\Gamma).
\]

Tedious but routine calculations using the definition of the
convolution product and involution on $C_c(\GG_\Gamma) \subset
C^*(\GG_\Gamma)$ show that
\[
\{u_{\alpha_{1,m}(r(\mu)) \mu,
\alpha_{1,m}(r(\nu))\nu} : m \ge 1, \mu,\nu \in \Lambda_m,
s(\mu) = s(\nu)\}
\]
is a Cuntz-Krieger $(\tgrphlim(\Lambda_n; p_n),
\Lambda_1^0)$-family in $C^*(\GG_\Gamma)$. By the universal
property of $C^*(\tgrphlim(\Lambda_n;p_n), \Lambda_1^0)$ (see
\cite[Section~3]{A}), there therefore exists a homomorphism
$\psi : C^*(\tgrphlim(\Lambda_n;p_n), \Lambda_1^0) \to
C^*(\GG_\Gamma)$ such that
\[
\psi(T_{\alpha_{1,m}(r(\mu)) \mu, \alpha_{1,m}(r(\nu))\nu}) =
u_{\alpha_{1,m}(r(\mu)) \mu, \alpha_{1,m}(r(\nu))\nu}
\]
for each $m,\mu,\nu$. The canonical gauge action $\beta : \T^k
\to \aut(C^*(\GG_\Gamma))$ determined by $\beta_z(f)(x,m,y) :=
z^{m}f(x,m,y)$ satisfies $\psi \circ \gamma_z = \beta_z \circ
\psi$ for all $z \in \T^k$, where $\gamma$ is the gauge action
on $C^*(\tgrphlim(\Lambda_n;p_n), \Lambda_1^0)$.
Proposition~4.3 of~\cite{Y1} shows that each
$u_{\alpha_{1,m}(r(\mu)) \mu, \alpha_{1,m}(r(\mu))\mu}$ is
nonzero, and it follows from the gauge-invariant uniqueness
theorem \cite[Theorem~3.5]{A} that $\psi$ is injective. The
topology on $\GG_\Gamma^{(0)}$ is generated by the collection
of compact open sets $\{U_{m,\lambda} : m \ge 1, \lambda \in
\Lambda_m\}$, and the topology on $\GG_\Gamma$ is generated by
the collection of compact open sets $\{U_{m,\mu} *_s U_{m,\nu}
: m \ge 1, \mu,\nu \in \Lambda_m, s(\mu) = s(\nu)\}$. Since
$C^*(\{u_{\alpha_{1,m}(r(\mu)) \mu, \alpha_{1,m}(r(\nu))\nu} :
m \ge 1, \mu,\nu \in \Lambda_m, s(\mu) = s(\nu)\}) \subset
C^*(\GG_\Gamma)$ contains the characteristic functions of these
sets, it follows that $\psi$ is also onto, and this completes
the proof.
\end{proof}

\begin{rmk}
The final statement of Proposition~\ref{prp:tower and
topological k-graph} suggests that we can regard
$\varprojlim(\Lambda_i, p_i)$ as a skew-product of $\Lambda$ by
$G$.

To make this precise, note that for $\lambda \in \Lambda$,
$c(\lambda) := (c_n(\lambda))^\infty_{n=1}$ belongs to $G$, and $c :
\Lambda \to G$ is then a cocycle. There is a natural bijection
between the cartesian product $\Lambda \times G$ and the topological
$k$-graph $\varprojlim(\Lambda_i, p_i)$, so we may view $\Lambda
\times G$ as a topological $k$-graph by pulling back the structure
maps from $\varinjlim(\Lambda_i, p_i)$. What we obtain coincides with
the natural definition of the skew-product $\Lambda \times_c G$.

With this point of view, we can regard Proposition~\ref{prp:tower and
topological k-graph} as a generalisation of
\cite[Theorem~7.1(ii)]{PQR2} to profinite groups and topological
$k$-graphs: $C^*(\Lambda \times_c G) \cong C^*(\Lambda) \times_\delta
G$.
\end{rmk}

\begin{example}[Example~\ref{eg:BD} continued]
Resume the notation of Examples \ref{eg:BD}~and~\ref{eg:BDii}.
The resulting projective limit $\varprojlim(\Lambda_n, p_n)$ is
the topological $1$-graph $E$ associated to the odometer action
of $\Z$ on the Cantor set as in \cite[Example~2.5(3)]{Y1}. That
is, $E$ can be realised as the skew-product of $B_1^*$ by the
$2$-adic integers $\Z_2$ with respect to the functor $c : B_1^*
\to \Z_2$ determined by $c(f) = (1,1,1,\dots)$, where $f$ is
the loop edge generating $B_1^*$.
\end{example}

\end{document}